\documentclass[final,11pt]{siamart251216}

\usepackage[english]{babel}
\usepackage{framed,multirow}
\usepackage{hyperref} 
\hypersetup{
    colorlinks=true,
    linkcolor=black,      
    citecolor=black,      
    filecolor=black,      
    urlcolor=black        
}
\usepackage{algpseudocode}
\usepackage{amssymb,amsmath,bm,amsfonts}
\usepackage{algorithm}
\usepackage{algcompatible}

\DeclareMathAlphabet{\mathbbm}{U}{bbm}{m}{n}

\usepackage{dsfont}
\usepackage{latexsym}
\usepackage{subcaption}
\usepackage{url}
\usepackage{xcolor}
\usepackage{mathtools}
\usepackage{cleveref}
\usepackage{showkeys}
\usepackage{multirow}
\usepackage{booktabs}
\usepackage{enumitem}  
\usepackage{array}
\usepackage{cancel}
\usepackage{algorithm}
\usepackage{algpseudocode}
\usepackage{mathabx} 

\newcommand{\A}{\mathcal{A}} 
\newcommand{\B}{\mathcal{B}}

\usepackage{tikz}
\usetikzlibrary{arrows.meta, positioning, calc, shapes, fit, backgrounds}
\tikzstyle{terminator} = [rectangle, draw, text centered, rounded corners]
\tikzstyle{process} = [rectangle, draw, text centered]
\tikzstyle{decision} = [diamond, draw, text centered]
\tikzstyle{data}=[trapezium, draw, text centered, trapezium left angle=60, trapezium right angle=120]
\tikzstyle{connector} = [draw, -{latex[length=2mm]}, thick, blue]
\tikzstyle{connector2} = [draw, -{latex[length=2mm]}, thick, green]
\pgfdeclarelayer{background}
\pgfsetlayers{background,main}

\newsiamremark{remark}{Remark}

\numberwithin{equation}{section}

\theoremstyle{definition}

\theoremstyle{remark}

\newcommand{\eps}{\varepsilon}

	\headers{High-Order AP Scheme for Kinetic Equations}{G. Dimarco, A. Klar, T. Köfler, L. Pareschi, and T. Sudarshan}

       \title {High-Order Asymptotic-Preserving Schemes for Kinetic Equations from Rarefied to Incompressible Regimes \thanks{The work of Theresa K\"{o}fler has been supported by the European Union's Horizon Europe research and innovation programme under grant agreement No. 101086214 (DATAHYKING project).
	The work of Axel Klar and Tiwari Sudarshan has been supported by German Research Foundation (DFG) through grant  KL-1105-34,  ``Asymptotic preserving higher-order  meshfree schemes for kinetic equations".
	The work of Lorenzo Pareschi was partially supported by the Royal Society under the Wolfson Fellowship ``Uncertainty quantification, data-driven simulations and learning of multiscale complex systems governed by PDEs" and by the FIS2023-01334 Advanced Grant ``Tackling complexity: advanced numerical approaches for multiscale systems with uncertainties" (ADAMUS). The work of Giacomo Dimarco has been supported by by the Italian Ministry of University and Research (MUR) through the PRIN 2022 project (No. 2022KKJP4X) ``Advanced numerical methods for time dependent parametric partial differential equations with applications". This work has been written within the activities of GNFM and GNCS groups of INdAM (Italian National Institute of High Mathematics). Program codes can be found on \url{https://github.com/theresakoefler/AP_schemes_kinetic_equs.git}.}}
	
\author{
  Giacomo Dimarco\thanks{Department of Mathematics and Computer Science, and Center for Modeling, Computing and Statistics, University of Ferrara, Italy (\email{giacomo.dimarco@unife.it}).}
  \and
  Axel Klar\thanks{Department of Mathematics, RPTU Kaiserslautern-Landau, Kaiserslautern, Germany (\email{klar@rptu.de}, \email{tiwari@rptu.de}).}
  \and
  Theresa K\"{o}fler\thanks{Department of Mathematics and Computer Science, University of Ferrara, Italy, and Department of Mathematics, RPTU Kaiserslautern-Landau, Germany (\email{theresa.kofler@unife.it}).}
  \and
  Lorenzo Pareschi\thanks{Department of Mathematics and Computer Science, University of Ferrara, Italy, and Maxwell Institute for Mathematical Sciences and Department of Mathematics, Heriot-Watt University, Edinburgh, UK (\email{l.pareschi@hw.ac.uk}).}
  \and
  Tiwari Sudarshan\footnotemark[3] 
}

\begin{document}
\maketitle

\begin{abstract} 
This work introduces a novel high-order numerical framework for solving kinetic equations, designed to remain uniformly valid across all regimes of the mean free path, spanning from the rarefied kinetic scale to the incompressible hydrodynamic limit. The method is built upon a micro-macro decomposition, which reformulates the underlying kinetic equation into a coupled system consisting of a macroscopic part, representing the fluid-dynamic evolution, and a microscopic part, describing the non-equilibrium deviations.
The proposed framework ensures high-order temporal accuracy through the use of Implicit-Explicit Runge–Kutta methods, which provide stability and efficiency in stiff regimes, while spatial resolution is enhanced by combining finite-difference WENO reconstructions with high-order central difference approximations. A key feature of the proposed methodology is its Asymptotic-Preserving (AP) property. We demonstrate that, in the appropriate asymptotic limit as the mean free path tends to zero, the scheme consistently reduces to a high-order finite-difference formulation of the incompressible Navier–Stokes equations.
To support the theoretical findings, a set of numerical experiments are performed on one- and two-dimensional benchmark problems, which confirm the accuracy, stability, and versatility of the method across different flow regimes.
\end{abstract}
\begin{keywords}
  BGK collision operator, micro-macro decomposition, asymptotic analysis, low Mach number limit, incompressible Navier--Stokes equations, numerical methods for stiff equations, IMEX schemes
\end{keywords}
\begin{AMS}
  35Q20, 65M06, 65L04
\end{AMS}
\section{Introduction}
It is well known that microscopic particle dynamics can be linked to macroscopic quantities in fluid dynamics with the help of kinetic equations \cite{Chap,Cer,Levermore91}.  Macroscopic limits of kinetic equations are obtained when  the Knudsen number, which is representing the ratio of the mean free path of particles between collisions to some characteristic length of the flow, is small. The transition from kinetic  equations to incompressible fluid flow  with a  finite Reynolds number requires additionally a small Mach number.
In this case, the scaling of the kinetic equation must be handled with care in order to recover the correct fluid behaviour
\cite{Levermore91,SR}.

In the incompressible  fluid dynamic regime, fast acoustic waves, propagating at the speed of sound, become decoupled from the slow convective modes. From a computational perspective, this creates a "stiffness" issue, additionally to the small mean free path problem. This  means that standard numerical schemes treating only the smallness of the mean free path  in an implicit way,  require still prohibitively small time steps 
in the low Mach limit.
Furthermore, traditional stochastic methods, like Direct Simulation Monte Carlo (DSMC), suffer from 
 large statistical fluctuations, if the  macroscopic velocity becomes negligibly small compared to thermal velocities. This necessitates the development of deterministic methods and sophisticated decomposition strategies \cite{TATSIOS2025113500,Dimarco_acta}.

To address the challenges of the transition between kinetic and fluid descriptions, the micro--macro decomposition has emerged as a cornerstone technique. The approach has been developed for hyperbolic and diffusive limits of kinetic  equations and hyperbolic relaxation systems. The  approach splits the distribution function into two orthogonal components, namely the macroscopic component, typically a Maxwellian distribution, that is uniquely determined by the macroscopic quantities, and the microscopic part, that captures the rarefaction effects.
We refer among many other contributions to \cite{JinLevermore1996,JinPareschiToscani1998,KlarAP}
and 
\cite{BennouneLemou,CrouseillesLemou,LemouMieussens,K992,JinPareschiToscani1999,NaldiPareschiToscani2002,Mapundi}. 
We note that for kinetic equations  it is often  implemented with the Discrete Velocity Method (DVM), meaning the velocity domain is replaced by a finite set of discrete velocities \cite{Dimarco_acta,Illner,Mieussens}.

Micro-macro decompositions are used to obtain  asymptotic-preserving (AP) numerical methods, whose main objective is to ensure stability and accuracy uniformly across different asymptotic regimes, without the need to refine the discretization parameters according to the scale of the problem. In particular, AP schemes are designed so that, as the scaling parameter tends to zero, the numerical method automatically degenerates into a consistent discretization of the limiting macroscopic equations. 
In this context, Implicit–Explicit (IMEX) time integrator methods have emerged as a powerful tool for the time discretization of partial differential equations with stiff relaxation terms \cite{DimarcoAP1,DimarcoAP2,IMEX_PR,Bosc,Bosc1,Carp,AscherRuuthSpiteri1997,BoscarinoPareschiRusso2024,Ruuth,Multis,Bosc2}. These schemes typically treat stiff collision operators implicitly while handling transport terms explicitly, thus combining stability with computational efficiency. Unlike fully implicit methods, which are often too costly in high dimensions, IMEX schemes avoid the need to solve large nonlinear systems, while still permitting time steps that are not restricted by the small relaxation parameter. Moreover, when properly designed, IMEX methods inherit the AP property, guaranteeing a smooth transition from the kinetic to the hydrodynamic regime without loss of accuracy. Their flexibility and high-order accuracy make them particularly well suited for coupling with advanced spatial discretizations, such as weighted essentially non--oscillatory (WENO) \cite{Shu} or central difference schemes, in order to achieve robust and accurate solvers across a wide range of flow regimes.

There have been many developments of AP schemes for kinetic equations with diffusive scaling as mentioned above. AP schemes for the low-Mach limit of compressible Navier-Stokes equations have also been discussed by many authors. We cite only two examples \cite{DegondTang2011,DimarcoLoubereVignal2017}.
On the other hand, the full incompressible Navier Stokes limit for kinetic equations has 
not been discussed frequently  from a numerical perspective, see \cite{K992} for a low order AP method.

The objective of this work is to introduce a novel high-order numerical framework for solving kinetic equations, designed to remain uniformly valid across all regimes of the mean free path, spanning from the rarefied kinetic scale to the incompressible  hydrodynamic limit. In this low Mach limit, the method should reduce to an IMEX high-order finite difference solver for the incompressible Navier-Stokes equations. The method is built upon a micro-macro decomposition, which reformulates the underlying kinetic equation into a coupled system consisting of a macroscopic part, representing the fluid-dynamic evolution, and a microscopic part, describing the non-equilibrium deviations.
It combines nonoscillatory spatial discretizations with IMEX Runge-Kutta time integrators and high order numerical integrators for the DVM. In this setting, the nonoscillatory upwind discretization of the convective part of the relaxation system naturally reduces, in the asymptotic limit, to a high-order treatment of the nonlinear convective terms of the incompressible equations. More specifically, we present a family of numerical schemes based on a micro-macro decomposition with diffusive scaling, capable of achieving uniform accuracy in the low Mach number limit without refining the discretization in space and time. In the asymptotic limit, the proposed methods become high-order projection methods for the INS equations, solved on a collocated grid.

The rest of the paper is organized as follows. Section \ref{sec2} recalls the connection between kinetic theory and the INS equations while Section \ref{sec2.1} introduces the coupled system, obtained by the micro--macro decomposition, which constitutes a one to one map between a full kinetic description and a consistent fluid description. Section \ref{APfirst} introduces the asymptotic preserving method and in particular the first order time discretization. Sections \ref{sec3} and \ref{sec3b} are dedicated to higher order  time and space discretizations that yield the proposed family of numerical schemes, based on IMEX Runge-Kutta methods, which in turn, we show to lead to a projection formulation for the INS equations. High-order spatial discretizations and high order numerical integrators are  applied consistently within this framework. Section \ref{sec4} presents a set of numerical experiments that illustrate the accuracy, efficiency, and stability of the proposed approach. Finally, Section \ref{sec5} draws some conclusions and outlines directions for future research.
\section{Kinetic equations and low Mach number limit}\label{sec2}

We introduce here the underlying kinetic model and recall some results concerning its incompressible limit. These results will serve as the starting point for the derivation of the model in the next section and will subsequently be used to establish the numerical connection between the two models. For a theoretical analysis of kinetic equations and their asymptotic limits, we refer to \cite{Levermore91,SR,GSR}, while for related numerical discussions we refer to \cite{Klar99,JunkKlar,JinPareschiToscani1998}. 

\subsection{Kinetic equation and macroscopic limit}
We consider the following equation
\begin{equation}
	\partial_t F + v \cdot \nabla_x F = C(F), \label{eq:kinetic}
\end{equation}
where $F = F(x,v,t)$ is the so-called distribution function, which gives the probability for a particle to be located at the spatial position $x \in D \subset \mathbb{R}^d, d=1,2,3$, with velocity $v=(v_1,\dots,v_d) \in \mathbb{R}^d$, at time $t \in [0,\infty)$.
The operator $C(F)$ denotes a given collision operator, describing the interactions between particles. 
For this paper, 
we focus on the specific case of the BGK collision operator \cite{BGK}, which reads
\begin{equation}
	C(F)(v) = -\tfrac{1}{\tau} \Big( F(v) - M_F(v) \Big), \label{eq:bgk}
\end{equation}
where 
\[
M_F = \frac{\rho_F}{(2\pi T_F)^{\frac{d}{2}}} \exp \left(- \frac{|v-u_F|^2}{2T_F}\right)
\]
is the Maxwellian having the same moments with respect to $1, v, |v|^2$ as $F$, i.e.
with $$
\rho_F = \int F(x,v ) dv , \;  \rho_F u_F = \int v F(x,v ) dv , \; \rho_F u_F^2 + d \rho_F T_F = \int \vert v \vert^2 F(x,v ) dv 
$$
and  $\tau$ is the relaxation time.
 
Introducing the diffusive space–time scaling $x \mapsto x/\varepsilon$ and $t \mapsto t/\varepsilon^2$, where $\varepsilon$ is the Knudsen number (related to the mean free path between two consecutive collisions), one obtains the scaled equation
\begin{equation}
	\partial_t F + \frac{1}{\varepsilon} v \cdot \nabla_x F
	= \frac{1}{\varepsilon^2} C(F). \label{eq:scaled}
\end{equation}

We also introduce the Mach number $Ma$, denoting the ratio of the bulk velocity to the sound speed, and the finite Reynolds number $Re$, a dimensionless parameter representing the ratio of inertial to viscous forces within a fluid.
These numbers are related to $\varepsilon$ by the following relation:
\begin{equation}
	\varepsilon = \frac{Ma}{Re}. \label{eq:eps-ma-re}
\end{equation}
Thanks to a perturbation procedure (see, for example, \cite{Levermore91,Esposito}),
the limit equation for \eqref{eq:scaled} as $\varepsilon \to 0$ is obtained by searching for solutions of the form
\begin{equation}\label{eq:ansatz}
	F = M \big(1 + \varepsilon f \big),
\end{equation}
where $M$ is the normalized Maxwellian with zero drift,
\[
M(v) = \frac{1}{(2\pi)^{d/2}} \exp\!\left(-\frac{|v|^2}{2}\right).
\]
Substituting this ansatz into \eqref{eq:scaled} yields
\begin{equation}
	 \partial_t f + \frac{1}{\varepsilon} v \cdot \nabla_x f
	= \frac{1}{\varepsilon^3}\frac{1}{M}C\left(M \big(1 + \varepsilon f \big)\right).
\end{equation}
Note that 
\begin{align*}
	\rho_F = 1+ \epsilon <f>, \; \rho_F u_F = \epsilon < vf>, \; \rho_F \vert u_F\vert^2 + d \rho_F T_F= d+ \epsilon < \vert v \vert^2 f>
\end{align*}
with
\[
\langle f \rangle = \langle f \rangle_{L^2(M)} = \int_{\mathbb{R}^d} f(v) M(v) \, dv.
\]
Therefore, we have the relations
\begin{align}
	\label{relation}
	\rho_F = 1+ \epsilon \rho, \; \rho_F u_F = \epsilon u, \;  T_F= 1+ \epsilon \frac{T}{1+\epsilon \rho} - \epsilon^2 \frac{\vert u\vert^2}{d(1+\epsilon \rho)^2}=1+ \epsilon T + \mathcal{O}(\epsilon^2)
\end{align}
with
$  \rho = <f> $, $ u = <v f>$ , $  T = <\frac{\vert v\vert^2-d}{d}f> $. 
Then,  using the  projection
\begin{align}
	\label{projection}Pf=  \rho +  v \cdot u+ \frac{\vert v\vert^2-d}{2} T 
\end{align}
a Taylor expansion of $C$ gives 
\begin{align}
	\label{expansion}
	\frac{1}{M} C(M(1+\epsilon f)) = \epsilon L f + \epsilon^2 Q (f) 
\end{align}
with
$$
Lf = L(I-P)f= \frac{1}{\tau}(P-I) f \; 
$$
and the remainder term
\[
Q(f)(v) =Q(\rho,u,T)(v)= \tfrac{1}{\varepsilon^2 \tau} \left( M^{-1}M_F - 1 - \varepsilon Pf\right).
\] 
Thus, one obtains the following equation for $f$:
\begin{align}
\partial_t f + \frac{1}{\varepsilon} v \cdot \nabla_x f = \frac{1}{\varepsilon^2} L(f)(v) + \frac{1}{\varepsilon} Q(f)(v). \label{eq:expansion}
\end{align}

As $\epsilon \rightarrow 0$ one obtains  \cite{SR,Levermore91}, that the distribution function $f$ converges to a distribution $f_0$ of the form 
\[
f_0 = \rho_0 + v \cdot u_0 + \frac{|v|^2 - d}{2} T_0
\]
and that the limit equations for the macroscopic moments $\rho_0,u_0,T_0$ are the
incompressible Navier Stokes equations with Boussinesq approximation for the density and temperature, that is
\begin{align}
	\label{limit0}
	\nabla_x \cdot  u_0 =0 \\
	\nabla_x (\rho_0+T_0) =0 \nonumber\\
	\partial_t u _0+ \nabla_x \cdot u _0 \otimes u_0 + \nabla_x \tilde{p}= \tau  \Delta_x  u_0\nonumber\\
	\partial_t T _0+\nabla_x \cdot (u_0T_0) =  \tau \Delta_x T_0,\nonumber
\end{align}
where $\varepsilon\tilde{p} = \rho +T$.
In passing we note that a rigorous proof of convergence for the full Boltzmann collision operator has been obtained in \cite{GSR}.

\subsection{Micro-macro decomposition and fluid dynamic limit}\label{sec2.1}
We will now derive a system of equations under a diffusive space-time scaling based on the BGK model. As will be shown later, this approach leads to the incompressible Navier--Stokes equations in the low Mach number limit. 
To that aim, we use a decomposition of $f$ into a macroscopic and a microscopic part 
\[
f = Pf + (I-P)f,
\]
where $P$ denotes again the projection onto the space of collision invariants and that remains valid at any time, see \cite{LemouMieussens}. By using basic properties of the collision operator, we obtain the following coupled system
\begin{align}
\partial_t Pf + \frac{1}{\varepsilon} \nabla_x \cdot P(vf) &= 0 \label{equ:Pf}\\
\partial_t (I-P)f + \frac{1}{\varepsilon} \nabla_x \cdot (I-P)(vf) &= - \frac{1}{\varepsilon^2 \tau} (I-P)f + \frac{1}{\varepsilon} Q(f),\label{equ:I-Pf}
\end{align}
with 
\[
(I-P)(vf) = (I-P)(vP(f)) + (I-P)(v(I-P)(f))
\]
and 
\[
(I-P)(vP(f)) = \mathcal{A}(v) \langle vf \rangle + \frac{\mathcal{B}(v)}{2} \langle \frac{v^2 - d}{d} f\rangle.
\]
with the polynomials 
$$
\mathcal{A}(v) = v \otimes v -\frac{\vert v \vert^2 }{d}I
$$
and 
$$
\mathcal{B}(v) = v (\vert v \vert^2 -(d+2)).
$$
Moreover, we obtain
\[
(I-P)(v(I-P)(f)) = v(I-P)f - \langle \mathcal{A}(v) (I-P)f \rangle v - \frac{|v|^2 - d}{2d} \langle\mathcal{B}(v)(I-P)f\rangle.
\]
Using moments of $f$ in order to rewrite \eqref{equ:Pf}, we get
\begin{align}
\varepsilon \partial_t \langle f \rangle +  \nabla_x \cdot \langle v f \rangle &= 0 \nonumber \\
\varepsilon \partial_t \langle v f \rangle + \nabla_x \cdot \langle v \otimes v f \rangle +  \nabla_x (T+\rho) &= 0  \\
\varepsilon \partial_t \langle (|v|^2 - (d+2))f \rangle +  \nabla_x \cdot \langle v(|v|^2 - (d+2))f \rangle &= 0, \nonumber
\end{align}
or
\begin{align}
\varepsilon \partial_t \rho +  \nabla_x \cdot u &= 0 \nonumber \\
\varepsilon \partial_t u + \nabla_x \cdot \langle \mathcal{A}(v) (I-P)f \rangle +  \nabla_x (T+\rho) &= 0 \label{eq:moments} \\
\varepsilon \partial_t S +  \nabla_x \cdot \langle \mathcal{B}(v) (I-P)f \rangle &= 0, \nonumber
\end{align}
where we use the notation $S = \langle (|v|^2 - (d+2))f \rangle = dT - 2\rho$.
Rewriting \eqref{equ:I-Pf} gives
\begin{align*}
\varepsilon^2 \partial_t (I-P)f + \varepsilon \nabla_x \cdot \left[ \mathcal{A}(v) u + \frac{\mathcal{B}(v)}{2} T + (I-P) (v(I-P)f)\right] = - \frac{1}{\tau} (I-P)f + \varepsilon Q(f).
\end{align*}
Defining 
\[
g = \frac{1}{\varepsilon} (I-P)f
\]
we obtain
\begin{align}
\varepsilon^2 \partial_t g +  \nabla_x \cdot \left[ \mathcal{A}(v) u + \frac{\mathcal{B}(v)}{2} T + \varepsilon (I-P) (vg)\right] = - \frac{1}{\tau} g + Q(f). \label{eq:g}
\end{align}
Combining \eqref{eq:moments} and \eqref{eq:g} gives
\begin{align}
\varepsilon \partial_t \rho +  \nabla_x \cdot u &= 0 \nonumber \\
\varepsilon \partial_t u + \varepsilon \nabla_x \cdot \langle \mathcal{A}(v) g \rangle +  \nabla_x (T+\rho) &= 0\nonumber \label{eq:moments2} \\
\partial_t T + \frac{2}{\varepsilon d} \nabla_x \cdot u + \frac{1}{d}\nabla_x \cdot \langle \mathcal{B}(v) g \rangle &= 0  \\  \nonumber
\varepsilon^2 \partial_t g +  \nabla_x \cdot \left[ \mathcal{A}(v) u + \frac{\mathcal{B}(v)}{2} T + \varepsilon (I-P) (vg)\right] &= - \frac{1}{\tau} g + Q(f).
\end{align}
\begin{theorem}
In the limit $\varepsilon \rightarrow 0$, the system of partial differential equations \eqref{eq:moments2} leads to the incompressible heat conducting Navier-Stokes equations.
\end{theorem}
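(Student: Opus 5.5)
The argument is a formal Hilbert-type expansion: I expand all unknowns in powers of $\varepsilon$,
\[
\rho=\rho_0+\varepsilon\rho_1+\dots,\quad u=u_0+\varepsilon u_1+\dots,\quad T=T_0+\varepsilon T_1+\dots,\quad g=g_0+\varepsilon g_1+\dots,
\]
substitute into \eqref{eq:moments2}, and collect equal powers. I first need the leading part of $Q(f)$. Using \eqref{relation} and a second-order Taylor expansion of $M^{-1}M_F$ around $\varepsilon=0$, I expect $Q(f)=\frac{1}{\tau}\bigl(\tfrac12\mathcal{A}(v):u\otimes u+\text{(terms in }\rho T,\,T^2,\,u T\text{ and }(I-P)\text{-parts)}\bigr)+\mathcal{O}(\varepsilon)$. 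Only the part orthogonal to the collision invariants matters, in particular the $\mathcal{A}(v):u_0\otimes u_0$ and $\mathcal{B}(v)\cdot u_0 T_0$ components. This step is routine but bookkeeping-heavy, and I will only track the components that survive the moments $\langle \mathcal{A}\,\cdot\rangle$ and $\langle\mathcal{B}\,\cdot\rangle$.

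\textbf{Constraints.} At order $\varepsilon^{-1}$, the first equation gives $\nabla_x\cdot u_0=0$. At order $\varepsilon^0$, the second equation gives $\nabla_x(\rho_0+T_0)=0$. The third equation at order $\varepsilon^{-1}$ gives $\nabla_x\cdot u_0=0$ again. At order $\varepsilon^0$, the $g$-equation gives the constitutive relation
\[
g_0=-\tau\,\nabla_x\!\cdot\!\Bigl[\mathcal{A}(v)u_0+\tfrac{\mathcal{B}(v)}{2}T_0\Bigr]+\tau Q_0,
\]
where $Q_0$ is the leading part of $Q$.

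\textbf{Closure.} I insert $g_0$ into the momentum and temperature equations at the next order. The second equation at order $\varepsilon$ reads $\partial_t u_0+\nabla_x\cdot\langle\mathcal{A}g_0\rangle+\nabla_x(\rho_1+T_1)=0$. The fourth-moment identities $\langle \mathcal{A}_{ij}\mathcal{A}_{kl}\rangle=\delta_{ik}\delta_{jl}+\delta_{il}\delta_{jk}-\frac2d\delta_{ij}\delta_{kl}$ and $\langle\mathcal{A}\,\mathcal{B}\rangle=0$, together with $\nabla_x\cdot u_0=0$, give
\[
-\tau\nabla_x\cdot\langle\mathcal{A}\,\nabla_x\cdot(\mathcal{A}u_0)\rangle=-\tau\Delta_x u_0 .
\]
The $Q_0$ contribution produces $\nabla_x\cdot(u_0\otimes u_0)$ up to a gradient term. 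All gradients are absorbed into $\tilde p=\rho_1+T_1+\dots$. For the temperature, I use the combination $S=dT-2\rho$, which removes the singular $\frac1\varepsilon\nabla_x\cdot u$ term. Taking the third equation of \eqref{eq:moments} at order $\varepsilon^0$ with $\rho_0=-T_0+\text{const}$ gives
\[
(d+2)\,\partial_t T_0+\nabla_x\cdot\langle\mathcal{B}g_0\rangle=0 .
\]
With $\langle\mathcal{B}_i\mathcal{B}_j\rangle=2(d+2)\delta_{ij}$ this yields $\tau\Delta_x T_0$ for the diffusion and $\nabla_x\cdot(u_0T_0)$ for the convection, recovering \eqref{limit0}.

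The main obstacle is the nonlinear convective terms. I have to show that the $\mathcal{A}$- and $\mathcal{B}$-projections of $Q_0$ give exactly $\nabla_x\cdot(u_0\otimes u_0)$ and $\nabla_x\cdot(u_0T_0)$ with coefficient one, with all leftover isotropic parts being pure gradients that go into the pressure. I would compute $M^{-1}M_F$ to second order explicitly as $1+\varepsilon Pf+\varepsilon^2\bigl(\tfrac12(v\cdot u)^2+(v\cdot u)T\,\tfrac{|v|^2-d-2}{2}+\dots\bigr)$, since the $(I-P)$-projection of $\tfrac12(v\cdot u)^2$ is $\tfrac12\mathcal{A}:u\otimes u$. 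Consistency of the constant normalization of $T$ versus $S$ also needs care. Throughout, the convergence is understood formally, under the well-prepared-data assumption that the expansion exists.
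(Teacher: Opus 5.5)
Your proposal is correct and follows essentially the same route as the paper's proof. Both use the leading-order constraints $\nabla_x\cdot u=0$ and $\nabla_x(\rho+T)=0$, then the constitutive relation $g=-\tau\nabla_x\cdot[\mathcal{A}(v)u+\tfrac{\mathcal{B}(v)}{2}T]+\tau Q$, which is inserted into the momentum and $S$-equations and closed with the moment identities for $\mathcal{A}$ and $\mathcal{B}$; your Hilbert expansion only makes explicit that $\tilde p=\rho_1+T_1$. One simplification from the paper you could adopt: obtain $\langle\mathcal{A}(v)Q\rangle\to\frac{1}{\tau}\mathcal{A}(u)$ and $\langle\mathcal{B}(v)Q\rangle\to\frac{d+2}{\tau}uT$ directly from the exact moments of $M_F$, e.g.\ $\int\mathcal{A}(v)M_F\,dv=\varepsilon^2\mathcal{A}(u)/(1+\varepsilon\rho)$ (using that $\mathcal{A},\mathcal{B}$ are orthogonal to $1$ and $Pf$), which avoids the second-order Taylor bookkeeping you flagged as the main obstacle.
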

\begin{proof}
In the limit $\varepsilon \rightarrow 0$, \eqref{eq:moments2} gives with $p = \epsilon \tilde p = \rho +T $
\begin{align*}
  \nabla_x \cdot u &= 0 \nonumber \\
  \nabla_x(\rho +T)&= 0 \nonumber \\
  \partial_t u + \nabla_x \cdot \langle \mathcal{A}(v) g \rangle + \nabla_x \tilde p &= 0\nonumber \label{eq:moments3} \\
\partial_t S  +\nabla_x \cdot \langle \mathcal{B}(v) g \rangle &= 0
\end{align*}
and
\begin{align*}
g&= - \tau  \nabla_x \cdot \left[ \mathcal{A}(v) u + \frac{\mathcal{B}(v)}{2} T \right]  + \tau Q(f).
\end{align*}

Note that  $S=dT-2 \rho = (d+2) T -2 p =(d+2) T -2 \epsilon \tilde p $
and note the following properties of the polynomials $\A$ and $\B$
 $$
< \B(v) \otimes \B(v) > = 2 (d+2) I 
$$
and
$$
<\A(v) \otimes \A(v) >  : \nabla_x u  = \nabla_x u +(\nabla_x u )^T  -  \frac{2}{d} \nabla_x \cdot  u I.
$$
Moreover, as $\epsilon \rightarrow 0$ one has 
$$
<\mathcal{A}(v) Q (f)>\  = \frac{1}{\epsilon^2 \tau }<\mathcal{A}(v) M_F> \rightarrow   \frac{1}{\tau} \mathcal{A}(u)
$$
and 
$$
<\mathcal{B}(v) Q (f)>  =\frac{1}{\epsilon^2 \tau }<\mathcal{B}(v) M_F>    \rightarrow  \frac{1}{\tau} (d+2) (u T).
$$
Altogether this gives in the limit $\epsilon \rightarrow 0$
\[
\langle \mathcal{A}(v) g\rangle  = - \tau (\nabla_x u + (\nabla_x u)^T) + \mathcal{A} (u) 
\]
and
\[
\langle \mathcal{B}(v) g\rangle  = - \tau (d+2) \nabla_x T + (d+2)(uT)
\]
and finally the thermal Navier-Stokes equations (\ref{limit0}).
\end{proof}

\section{The asymptotic preserving numerical  method}
\label{APfirst}
We start by treating  only the stiff low-Mach limit and the stiffness in the Knudsen number implicitly. The scheme for the limit equations is a projection scheme treating the viscous terms explicitly. A version yielding an implicit treatment of the viscous terms in the limit is discussed afterwards.
We separate  the implicit and explicit components as follows
\begin{equation}
\begin{split} \label{implicit-explicit}
\partial_t \rho +  \underbrace{\frac{1}{\varepsilon}\nabla_x \cdot u}_{implicit} &= 0 \\
\partial_t u + \underbrace{ \nabla_x \cdot \langle \mathcal{A}(v) g \rangle + \frac{1}{\varepsilon} \nabla_x (T+\rho)}_{implicit} &= 0 \\
\partial_t T +  \underbrace{\frac{2}{\varepsilon d} \nabla_x \cdot u + \frac{1}{d} \nabla_x \cdot \langle \mathcal{B}(v) g \rangle}_{implicit} &= 0\\
\varepsilon^2 \partial_t g +  \underbrace{\nabla_x \cdot \left[ \mathcal{A}(v) u + \frac{\mathcal{B}(v)}{2}T + \varepsilon (I-P)(vg) \right]}_{explicit} &= -  \underbrace{\frac{1}{\tau} g}_{implicit} + \underbrace{Q(\rho,u,T)}_{explicit}
\end{split}
\end{equation}
Thus, for the pressure we have 
\begin{equation}
\begin{split} 
\partial_t p +  \underbrace{\frac{d+2}{\varepsilon d} \nabla_x \cdot u + \frac{1}{d} \nabla_x \cdot \langle \mathcal{B}(v) g \rangle}_{implicit} &= 0\\
\end{split}
\end{equation}

\subsection{First-order time discretization and AP property}
The time discretization is then
\begin{align}
\frac{1}{\Delta t } (u^{n+1/2}- u^{n})+   \nabla_x \cdot \langle \mathcal{A}(v) g^{n+1}  \rangle =0 \nonumber\\
\frac{1}{\Delta t } (u^{n+1}- u^{n+1/2})+ \frac{1}{\epsilon} \nabla_x p^{n+1}=0\label{us}\\
\frac{1}{\Delta t } (T^{n+1/2}- T^{n})+ \frac{1}{d}\nabla_x\cdot \langle \mathcal{B}(v) g^{n+1} \rangle= 0\nonumber\\
\frac{1}{\Delta t } (T^{n+1}- T^{n+1/2})+ \frac{2}{\epsilon d}\nabla_x\cdot u^{n+1} = 0\nonumber\\
\frac{1}{\Delta t } (\rho^{n+1}- \rho^{n})+ \frac{1}{\epsilon }\nabla_x\cdot u^{n+1} = 0\nonumber
\end{align}
This leads for the pressure to
\begin{align}
\frac{1}{\Delta t } (p^{n+1/2}- p^{n})+ \frac{1}{d}\nabla_x\cdot \langle \mathcal{B}(v) g^{n+1} \rangle = 0\nonumber\\
\frac{1}{\Delta t } (p^{n+1}- p^{n+1/2})+ \frac{(d+2)}{\epsilon d}\nabla_x\cdot u^{n+1} = 0\label{ps}
\end{align}
Moreover,
\begin{eqnarray*}
	\frac{\varepsilon^2}{\Delta t} (g^{n+1} -g^n) +  \nabla_x \cdot \left[ \mathcal{A}(v) u^n + \frac{\mathcal{B}(v)}{2}T^n + \varepsilon (I-P)(vg^n) \right]&= -  \frac{1}{\tau} g^{n+1} + Q(\rho^n,u^n,T^n)
\end{eqnarray*}
which gives 
with $\gamma = \frac{1}{\epsilon^2 + \frac{\Delta t}{\tau} }=  \frac{\tau }{\epsilon^2 \tau + \Delta t}$
and $Q^n =Q(\rho^n,u^n,T^n)$
\begin{eqnarray}
\label{Gamma}
g^{n+1}= \epsilon^2 \gamma g^n+ \Delta t \gamma Q^n-\Delta t \gamma \nabla_x \cdot \left[ \mathcal{A}(v) u^n + \frac{\mathcal{B}(v)}{2}T^n + \varepsilon (I-P)(vg^n) \right].
\end{eqnarray}
The discrete equations can be rearranged in a more efficient way:
Compute first $u^{n+1/2},T^{n+1/2}$ using 
\begin{align*}
	\frac{1}{\Delta t } (u^{n+1/2}- u^{n})+   \nabla_x \cdot \langle \mathcal{A}(v) g^{n+1}  \rangle  =0 \nonumber\\
	\frac{1}{\Delta t } (T^{n+1/2}- T^{n})+ \frac{1}{d}\nabla_x\cdot \langle \mathcal{B}(v) g^{n+1}  \rangle   = 0\nonumber
\end{align*}
or 
%
%
\begin{align}
\label{e1}
\frac{1}{\Delta t} (u^{n+1/2} - u^{n}) &+ \epsilon^2 \gamma \nabla_x \cdot \langle \mathcal{A}(v) g^{n} \rangle - \Delta t \gamma \left[ \Delta_x u^n + \frac{d-2}{d} \nabla_x (\nabla_x \cdot u^n) \right] \nonumber \\
& + \Delta t \gamma \nabla_x \cdot \langle \mathcal{A}(v) Q^{n} \rangle = 0, \\[1.5ex]
\label{e2}
\frac{1}{\Delta t} (T^{n+1/2} - T^{n}) &+ \frac{\epsilon^2 \gamma}{d} \nabla_x \cdot \langle \mathcal{B}(v) g^{n} \rangle - \Delta t \gamma \frac{d+2}{d} \Delta_x T^{n} \nonumber \\
& + \frac{\Delta t \gamma}{d} \nabla_x \cdot \langle \mathcal{B}(v) Q^{n} \rangle = 0.
\end{align}
Then, we obtain $p^{n+1/2}$ simply via 
\begin{eqnarray}
\label{pn12}
p^{n+1/2}- p^{n}=T^{n+1/2}- T^{n}.
\end{eqnarray}
Moreover,
\begin{eqnarray}
\label{rhop}
\frac{d+2}{d} (\rho^{n+1}- \rho^{n})=p^{n+1}- p^{n+1/2}.
\end{eqnarray}
Combining (\ref{ps}) and (\ref{us}) and the above gives
\begin{eqnarray}
\label{ppoisson}
\frac{\epsilon^2}{\Delta t^2} p^{n+1}- \frac{d+2}{d}\Delta_x  p^{n+1}  = \frac{\epsilon^2}{\Delta t^2} p^{n+1/2}   - \frac{(d+2) \epsilon }{d \Delta t } \nabla_x \cdot u^{n+1/2}.
\end{eqnarray}
Then, (\ref{rhop}) can again be used to compute $\rho^{n+1}$ and finally
$T^{n+1}$ via
\begin{eqnarray}
\label{Tp}
T^{n+1}- T^{n+1/2}=\frac{2}{d} (\rho^{n+1}- \rho^{n}).
\end{eqnarray}
The numerical algorithm is 
\begin{algorithm}\caption{AP algorithm, first-order time discretization}
	Start with $\rho^n,u^n,T^n$ and $g^{n}$ 
	\begin{enumerate}
		\item Compute $g^{n+1}$  via (\ref{Gamma}).
		\item Compute
		$u^{n+1/2}, T^{n+1/2}$ explicitly via 	(\ref{e1}) and (\ref{e2}).
		Compute $p^{n+1/2}$ via (\ref{pn12}).
		\item Compute  $\tilde  p^{n+1}=\frac{p^{n+1}}{\epsilon}$ using (\ref{ppoisson})  via 
		\begin{eqnarray}
		\frac{\epsilon^2}{\Delta t^2} \tilde p^{n+1}- \frac{d+2}{d}\Delta_x  \tilde p^{n+1}  = \frac{\epsilon}{\Delta t^2} p^{n+1/2}   - \frac{(d+2) }{d \Delta t } \nabla_x \cdot u^{n+1/2}
		\end{eqnarray}
		and $u^{n+1} $ as 
		$$u^{n+1}= u^{n+1/2}- \Delta t \nabla_x  \tilde p^{n+1}.$$
		\item 
		Compute  $\rho^{n+1}$ via 
		\begin{eqnarray}
		\frac{d+2}{d} (\rho^{n+1}- \rho^{n})=\epsilon \tilde p^{n+1}- p^{n+1/2}
		\end{eqnarray}
		or directly via
		\begin{eqnarray}
		\frac{1}{\Delta t } (\rho^{n+1}- \rho^{n})+\frac{1}{\epsilon} \nabla_x \cdot u^{n+1}=0
		\end{eqnarray}
		and 
		$ T^{n+1}$ via  (\ref{Tp}).
	\end{enumerate}	
\end{algorithm}
\begin{remark}
	As $\epsilon \rightarrow 0$ this gives with  $\gamma \rightarrow \frac{\tau}{\Delta t }$ the following algorithm.
	Compute 		
	\begin{eqnarray*}
		g^{n+1}=  \tau Q(\rho^n,u^n,T^n)-\tau \nabla_x \cdot \left[ \mathcal{A}(v) u^n + \frac{\mathcal{B}(v)}{2}T^n \right]
	\end{eqnarray*}
	and
	$T^{n+1/2}$ via 	
	\begin{eqnarray*}
		T^{n+1/2} 
		= T^{n} -  \Delta t \frac{d+2}{d}[\nabla_x \cdot (u^nT^n)-  \tau \Delta_x T^n]
	\end{eqnarray*}
	and
	\begin{eqnarray}
	u^{n+1/2} =  u^n - \Delta t [\nabla_x \cdot \A(u^n ) - \tau \Delta_x u^n]
	\end{eqnarray}
	Compute	  \begin{eqnarray*}
		\Delta_x \tilde p^{n+1}  =  \frac{1 }{ \Delta t } \nabla_x \cdot u^{n+1/2}
	\end{eqnarray*} 
	and then
	$$
	u^{n+1} = u^{n+1/2}- \Delta t \nabla_x \tilde p,
	$$	
	which gives 
	$\nabla_x \cdot u^{n+1} =0$.
	Finally,  for $\epsilon \rightarrow 0$ we have 
	\begin{eqnarray*}
		T^{n+1}-T^n =	T^{n+1}-T^{n+1/2}+T^{n+1/2}-T^n \\= (-\frac{2}{d+2}+1)(T^{n+1/2}-T^n )
		=-  \Delta t[\nabla_x \cdot (u^nT^n)-  \tau \Delta_x T^n]
	\end{eqnarray*}
	due to
	\begin{eqnarray*}
		T^{n+1}-T^{n+1/2}=\frac{2}{d} (\rho^{n+1}- \rho^{n})
		=\frac{2}{d+2}(\epsilon \tilde p^{n+1}- p^{n+1/2})\\
		=-\frac{2}{d+2}( p^{n+1/2}- \epsilon \tilde p^{n})
		= -\frac{2}{d+2}(T^{n+1/2}-T^n )
	\end{eqnarray*}
	in the limit $\epsilon \rightarrow 0$ .
	Thus, 	for $\epsilon \rightarrow 0$
	\begin{eqnarray*}
		T^{n+1}= T^n-  \Delta t[\nabla_x \cdot (u^nT^n)-  \tau \Delta_x T^n],
	\end{eqnarray*}
	We have  obtained 	a standard projection scheme for the incompressible Navier-Stokes equations with an explicit treatment of the diffusive terms in the thermal Navier--Stokes limit.

\end{remark}

\begin{remark}
	\label{remCFL}
	Following the analysis in \cite{Bosc1} one observes that for stability the above scheme requires for large
	$\epsilon$
	a time step restriction like  the original kinetic equation, that means we choose 
	\begin{equation}
		\label{CFL}
		\Delta t  =  CFL \frac{ \Delta x \epsilon }{v_{max} }.
	\end{equation}
	For smaller values of $\epsilon $ and, in particular for $\epsilon $ near $0$ the scheme requires the same
	parabolic time step restriction as the explicitly discretized  limit equations, that means a time step $\Delta t$ chosen as 
	\begin{equation}
		\label{CFL2}
		\Delta t  =  CFL  \min\{\frac{ \Delta x}{\max \vert u \vert} , \frac{( \Delta x)^2}{2 \tau}\}.
	\end{equation}
\end{remark}

\begin{remark}[Additional implicit discretization treating the parabolic stiffness]
\label{pstiff}
Solving two additional Helmholtz problems instead of (\ref{e1}) and (\ref{e2})  a projection scheme for the thermal Navier-Stokes system with an implicit treatment of the 
diffusive terms can be obtained avoiding the parabolic stiffness in (\ref{CFL2}).
\end{remark}

\section{Higher order numerical methods - Time discretization}\label{sec3}
The purpose of this section is to introduce higher order  numerical methods which provide uniformly stable approximations with respect to the small parameter $\varepsilon$. In the asymptotic limit 
$\varepsilon\to 0$, these methods yield high–order accurate approximations of the incompressible Navier–Stokes equations.

The main idea is to use a suitable combination of implicit – explicit (IMEX) Runge - Kutta discretizations \cite{IMEX_PR,Bosc1}. The main advantage of the proposed strategy is that, compared to a fully explicit method, the time step is not restricted by the stiffness of the system, while compared to a fully implicit scheme, the solution procedure is more straightforward, since the system to be solved is less involved. We will show that this choice is sufficient to obtain a projection scheme for the incompressible Navier–Stokes equations in the mean–free–path limit. 

\subsection{General IMEX schemes}
The general IMEX–RK schemes can be conveniently represented by a double Butcher tableau of the following form:
\begin{align}
	\text{Explicit: } 
	\begin{array}{c|c}
		\tilde{c} & \tilde{A} \\ \hline \\ [-1.9ex]
		& \tilde{w}^T
	\end{array}
	\qquad
	\text{Implicit: }
	\begin{array}{c|c}
		c & A \\ \hline \\[-1.9ex]
		& w^T
	\end{array}
\end{align}
For the explicit scheme, $\tilde{A}$ is an $s \times s$ lower–triangular matrix with zero diagonal entries.
For the implicit scheme, the $s \times s$ matrix $A$ is restricted to the class of diagonally implicit Runge–Kutta (DIRK) methods, which means that $a_{ij} = 0$ for $j > i$. The vectors $w$ and $\tilde{w}$ contain the quadrature weights used to combine the internal stages of the Runge–Kutta method, while the coefficients $c$ and $\tilde{c}$ are defined as
\[c_i = \sum_{j=1}^{i} a_{ij}, \quad \tilde{c}_i = \sum_{j=1}^{i-1} \tilde{a}_{ij}.\]
The following definitions characterize the IMEX schemes and the properties we aim to consider in this work. For further details we refer for instance to \cite{IMEX_PR}.
\begin{definition}
	An IMEX RK method is said to be of type A, if the matrix $A \in \mathbb{R}^{s \times s}$ is invertible, or equivalently $a_{ii} \neq 0$, $i = 1,\dots,s$.
\end{definition}
\begin{definition}
	An IMEX RK method is said to be of type CK, if the matrix $A \in \mathbb{R}^{s \times s}$ can be written as
	\begin{align*}A=\left(
		\begin{matrix}
			0 & 0 \\ a & \hat{A}
		\end{matrix}\right)
	\end{align*}
	with $a = (a_{21},\dots,a_{s1})^T \in \mathbb{R}^{(s-1)}$ and the submatrix $\hat{A} \in \mathbb{R}^{(s-1) \times (s-1)}$ invertible. If additionally $a = 0$, the scheme is said to be of type ARS.
\end{definition}
\begin{definition}
	An IMEX RK method is called implicitly stiffly accurate (ISA), if the corresponding DIRK method is stiffly accurate, which means that
	\begin{align*}
		a_{si} = w_i, \qquad i = 1,\dots,s.
	\end{align*}
	If additionally the explicit method satisfies
	\begin{align}
		\tilde{a}_{si} = \tilde{w}_i, \qquad i = 1,\dots,s-1,
	\end{align}
	the IMEX RK method is called globally stiffly accurate (GSA).
\end{definition}
Note that the numerical solution of a GSA IMEX RK scheme coincides exactly with the last internal stage \cite{IMEX_PR},\cite{Hairer}. We now apply the above general IMEX-RK approach to the system \eqref{eq:moments2}. Note that we add an additional equation for $p$, that is derived by summing the equations for $\rho$ and $T$, in the following for reasons of being able to implement IMEX RK without any operator splitting.

\subsection{Higher order AP time discretization}
We proceed as before by separating the implicit and the explicit components as in (\ref{implicit-explicit}).
So, we obtain now  for the internal stages: 
\begin{align}
\label{InternalStages}
\rho^{(i)} &= \rho^n - \frac{\Delta t}{\varepsilon} \sum_{j = 1}^i a_{ij} \nabla_x \cdot u^{(j)}, \nonumber \\
u^{(i)} &= u^n - \Delta t  \sum_{j = 1}^i \left( a_{ij} \nabla_x \cdot \langle \mathcal{A}(v) g^{(j)} \rangle  + \frac{a_{ij}}{\varepsilon} \nabla_x (T^{(j)} + \rho^{(j)})\right), \nonumber \\
T^{(i)} &= T^n - \Delta t  \sum_{j = 1}^i \left( \frac{a_{ij}}{d} \nabla_x \cdot \langle \mathcal{B}(v) g^{(j)} \rangle  + \frac{2 a_{ij}}{d \varepsilon} \nabla_x \cdot u^{(j)}\right), \nonumber \\
p^{(i)} &= p^n - \Delta t  \sum_{j = 1}^i \left( \frac{a_{ij}}{d} \nabla_x \cdot \langle \mathcal{B}(v) g^{(j)} \rangle  + \frac{d+2}{d \varepsilon} a_{ij} \nabla_x \cdot u^{(j)}\right), \\
g^{(i)} &= g^n - \frac{\Delta t}{\varepsilon^2} \sum_{j=1}^{i-1} \tilde{a}_{ij} \Bigl( \nabla_x \cdot \left[ \mathcal{A}(v)u^{(j)} + \tfrac{\mathcal{B}(v)}{2}T^{(j)} + \varepsilon (I-P) (vg^{(j)})\right] \nonumber \\
&\quad - Q(u^{(j)},\rho^{(j)},T^{(j)}) \Bigr) - \frac{\Delta t}{\varepsilon^2 \tau} \sum_{j=1}^i a_{ij} g^{(j)}. \nonumber
\end{align}

\begin{theorem}\label{ThmFullBGKTypeA}
	If the IMEX method is of type A and satisfies the GSA property, then in the fluid dynamic limit $\varepsilon \rightarrow 0$ the IMEX RK scheme \eqref{InternalStages} becomes a consistent discretization for the thermal  incompressible Navier Stokes equations.
\end{theorem}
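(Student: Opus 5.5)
Since $A$ is invertible (type A), the approach is to rewrite the stage relations \eqref{InternalStages} in stage-increment form. We then pass to $\varepsilon\to 0$ stage by stage, by induction on $i$, and read off the limit scheme. Concretely, define the implicit residuals $K^{(j)}$ for each variable. For example,
\[
\Delta t\sum_{j\le i} a_{ij}\Big(\nabla_x\cdot\langle\mathcal A g^{(j)}\rangle+\tfrac1\varepsilon\nabla_x p^{(j)}\Big)=u^n-u^{(i)},
\]
and similarly for $\rho$, $T$, $p$. Inverting $A$ expresses each implicit stage term as a combination of the differences $u^n-u^{(k)}$, $k\le i$. The same inversion applied to the $g$-equation, multiplied by $\varepsilon^2$, gives
\[
\frac{\Delta t}{\tau}\,a_{ii}\, g^{(i)}=\varepsilon^2\big(g^n-g^{(i)}\big)-\Delta t\sum_{j<i}\tilde a_{ij}\big(\nabla_x\cdot[\mathcal A u^{(j)}+\tfrac{\mathcal B}{2}T^{(j)}+\varepsilon(I-P)(vg^{(j)})]-Q^{(j)}\big)-\frac{\Delta t}{\tau}\sum_{j<i}a_{ij}g^{(j)} .
\]
The key point of type A is that $a_{ii}\neq 0$, so even $g^{(1)}$ is determined in the limit. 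Letting $\varepsilon\to0$ and inverting the lower-triangular $A$, I expect (assuming the stages stay bounded) that
\[
g^{(i)} = \tau\big(\textstyle\sum_j (A^{-1}\tilde A)_{ij}\,(Q^{(j)}-\nabla_x\cdot[\mathcal A u^{(j)}+\tfrac{\mathcal B}{2}T^{(j)}])\big)/\Delta t\cdot\Delta t ,
\]
that is, each $g^{(i)}$ becomes a combination of the local Chapman--Enskog closures evaluated at the earlier stages. This holds independently of the initial data $g^n$, which is exactly why type A is needed: it avoids the initial layer that a type CK method would leave in the first stage.

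Next I would take the limit in the macroscopic stages. Multiplying the $\rho$, $u$ and $T$ stage equations by $\varepsilon$ and letting $\varepsilon\to0$, the invertibility of $A$ gives $\nabla_x\cdot u^{(j)}=0$ and $\nabla_x p^{(j)}=0$ at every stage $j$. Here $p^{(j)}=\varepsilon\tilde p^{(j)}$ plays the role of a Lagrange multiplier, with $\tilde p^{(j)}$ kept finite. Substituting the limit $g^{(j)}$ from the first step and using the moment identities $\langle\mathcal A\otimes\mathcal A\rangle$, $\langle\mathcal B\otimes\mathcal B\rangle$ together with the limits of $\langle\mathcal A Q\rangle$ and $\langle\mathcal B Q\rangle$ established in the proof of the first Theorem, one obtains $\langle\mathcal A g^{(j)}\rangle$ and $\langle\mathcal B g^{(j)}\rangle$ as the corresponding combinations of $\mathcal A(u)-\tau(\nabla u+\nabla u^T)$ and $(d+2)(uT-\tau\nabla T)$. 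The $u$-stage equations then read
\[
u^{(i)}=u^n-\Delta t\sum_j a_{ij}\big(\nabla_x\cdot\langle\mathcal A g^{(j)}\rangle+\nabla_x\tilde p^{(j)}\big),
\qquad \nabla_x\cdot u^{(i)}=0 .
\]
After substituting $g$, the convective and viscous terms enter through the explicit tableau $\tilde A$, via $A\,A^{-1}\tilde A=\tilde A$, and the pressure enters through $A$. This is an IMEX-RK projection method for the incompressible Navier--Stokes equations. For the temperature, I would combine the $T$ and $p$ equations as in the first-order Remark: the singular term $\frac{1}{\varepsilon}\nabla_x\cdot u$ is eliminated using $S=(d+2)T-2p$ and $p\to$ const. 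This leaves $T^{(i)}=T^n-\Delta t\sum_j\tilde a_{ij}\,[\nabla_x\cdot(u^{(j)}T^{(j)})-\tau\Delta_x T^{(j)}]$, with the factor $\frac{d+2}{d}$ cancelling against the $\frac{d}{d+2}$ from the $S$-relation.

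Finally, the GSA property lets me identify $(\rho^{n+1},u^{n+1},T^{n+1},g^{n+1})$ with the last stage. Hence the limiting constraints $\nabla_x\cdot u^{n+1}=0$ and $\nabla_x(\rho^{n+1}+T^{n+1})=0$ hold at the new time level, and $g^{n+1}$ is again well-prepared. Consistency then follows because $(\tilde A,\tilde w)$ and $(A,w)$ are consistent RK tableaux applied to the limit equations \eqref{limit0}. I expect the main obstacle to be the first step. It requires justifying the formal limit in the $g$-stages, specifically that $\varepsilon^2 g^{(i)}\to0$ and that the terms $\frac1\varepsilon\nabla_x\cdot u^{(j)}$ and $\frac1\varepsilon\nabla_x p^{(j)}$ remain bounded. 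It also requires the careful bookkeeping of $A^{-1}\tilde A$, to check that the viscous and convective terms indeed combine into the explicit tableau $\tilde A$ rather than a mixed one. I would handle this by working at the level of stage increments and using the lower-triangular structure inductively in $i$.
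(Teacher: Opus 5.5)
Your proposal is correct and follows the paper's proof essentially step by step. The limit relation $\sum_{j\le i}a_{ij}g^{(j)}\to-\tau\sum_{j<i}\tilde a_{ij}\bigl(\nabla_x\cdot[\mathcal{A}(v)u^{(j)}+\tfrac{\mathcal{B}(v)}{2}T^{(j)}]-Q^{(j)}\bigr)$, which is what your $A^{-1}\tilde A$ bookkeeping expresses, is exactly what the paper inserts into the $u$- and $T_*$-stages so that convection and diffusion carry the explicit tableau. The temperature is handled by the same elimination of the $\frac{1}{\varepsilon}\nabla_x\cdot u$ term: your $S=(d+2)T-2p$ combination is the paper's relation $T^{(i)}-T_*^{(i)}=-\frac{2}{d+2}(T_*^{(i)}-T^n)$. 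GSA then identifies the last stage with the new time level, as in the paper.

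The only difference is minor. You obtain $\nabla_x\cdot u^{(j)}=0$ directly from $\varepsilon$ times the $\rho$-stage and the invertibility of $A$, whereas the paper derives the limiting Poisson equation for $\tilde p^{(i)}$ and argues by induction on $i$. Both are the same formal argument resting on $a_{ii}\neq 0$. The paper also absorbs $\nabla_x|u|^2/d$ into the pressure, which is harmless for consistency.
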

\begin{proof}
Rewriting the last equation of \eqref{InternalStages} gives
\begin{align}
	\label{InternalStagesForg}
&g^{(i)} = \, \frac{\varepsilon^2 \tau}{\varepsilon^2 \tau + a_{ii} \Delta t} g^n \nonumber \\
&- \frac{\Delta t \tau}{\varepsilon^2 \tau + a_{ii} \Delta t} \sum_{j=1}^{i-1} \tilde{a}_{ij} \Biggl( \nabla_x \cdot \left[ \mathcal{A}(v) u^{(j)} + \frac{\mathcal{B}(v)}{2} T^{(j)} + \varepsilon (I-P)(vg^{(j)}) \right] - Q^{(j)} \Biggr) \\
&- \frac{\Delta t}{\varepsilon^2 \tau + a_{ii} \Delta t} \sum_{j=1}^{i-1} a_{ij} g^{(j)},\nonumber
\end{align}
where $Q^{(j)} = Q(u^{(j)},\rho^{(j)},T^{(j)})$ for readability. Then, by inserting  this 
into the second equation of \eqref{InternalStages}, using orthogonality of $\mathcal{A}(v)$ and $\mathcal{B}(v)$  and taking the limit $\varepsilon \to 0$ we obtain
\begin{equation}
\begin{split}
u^{(i)} = \, & u^n - \Delta t  \sum_{j = 1}^{i-1} a_{ij} \nabla_x \cdot \langle \mathcal{A}(v) g^{(j)} \rangle \\
& + \Delta t \tau  \sum_{j=1}^{i-1} \tilde{a}_{ij} \nabla_x \cdot \left( \langle \mathcal{A}(v) \otimes \mathcal{A}(v) \rangle : \nabla_x u^{(j)} - \langle \mathcal{A}(v) Q^{(j)} \rangle \right) \\
& + \Delta t  \sum_{j = 1}^{i-1} a_{ij} \nabla_x \cdot \langle \mathcal{A}(v) g^{(j)} \rangle - \frac{\Delta t}{\varepsilon} \sum_{j = 1}^i a_{ij} \nabla_x (T^{(j)} + \rho^{(j)}).
\end{split}
\end{equation}
Rescaling $p$ as  $\varepsilon \tilde p $ and, for simplicity, dropping  $\widetilde{~}$ ~in what follows, we obtain
\begin{equation}
\begin{split} \label{InternalStagesforu}
u^{(i)} &= u^n 
+ \Delta t \tau  \sum_{j=1}^{i-1} \tilde{a}_{ij}  \nabla_x \cdot \left( \langle \mathcal{A}(v) \otimes  \mathcal{A}(v) \rangle : \nabla_x u^{(j)} - \langle \mathcal{A}(v) Q^{(j)} \rangle\right) 
- \Delta t  \sum_{j = 1}^i  a_{ij} \nabla_x p^{(j)}.
\end{split}
\end{equation}
We now consider the stage equations for $p$, that can be directly obtained from the equations for $\rho$ and $T$.
\begin{equation}
\varepsilon p^{(i)} = \varepsilon p^n  - \Delta t  \sum_{j = 1}^i \left( \frac{1}{d} a_{ij} \nabla_x \cdot \langle \mathcal{B}(v) g^{(j)} \rangle  + \frac{2+d}{d \varepsilon} a_{ij} \nabla_x \cdot u^{(j)}\right) 
\end{equation}
Substituting $u^{(i)}$ gives
\begin{equation}
\label{eq:pressure_stage}
\begin{split}
p^{(i)} = \, & p^n - \frac{\Delta t}{d \varepsilon} \sum_{j = 1}^i a_{ij} \nabla_x \cdot \langle \mathcal{B}(v) g^{(j)} \rangle 
- \frac{(d+2)\Delta t}{d \varepsilon^2} \sum_{j = 1}^{i-1} a_{ij} \nabla_x \cdot u^{(j)} \\
& - \frac{d+2}{d} \frac{\Delta t}{\varepsilon^2} a_{ii} \nabla_x \cdot u^{(i)}_* 
+ \frac{d+2}{d} \frac{\Delta t^2}{\varepsilon^2} a_{ii}^2 \Delta_x p^{(i)}.
\end{split}
\end{equation}
where
\begin{equation}
u^{(i)}_* = u^n - \Delta t  \sum_{j = 1}^i \left( a_{ij} \nabla_x \cdot \langle \mathcal{A}(v) g^{(j)} \rangle \right) - \Delta t  \sum_{j = 1}^{i-1} \left( a_{ij} \nabla_x p^{(j)} \right).
\end{equation}
Rewriting gives
\begin{equation}
\label{InternalStagesforp}
\begin{split}
\frac{d+2}{d} a_{ii}^2 \Delta_x p^{(i)} - \frac{\varepsilon^2}{\Delta t^2 } p^{(i)} = \, & -\frac{\varepsilon^2}{\Delta t^2 } p^n + \frac{\varepsilon}{d \Delta t } \sum_{j = 1}^i a_{ij} \nabla_x \cdot \langle \mathcal{B}(v) g^{(j)} \rangle \\
& + \frac{d+2}{d \Delta t } \sum_{j = 1}^{i-1} a_{ij} \nabla_x \cdot u^{(j)} + \frac{(d+2)a_{ii}}{d \Delta t } \nabla_x \cdot u^{(i)}_*.
\end{split}
\end{equation}
Taking the divergence of the second equation of \eqref{InternalStages} gives for the first stage
\begin{equation} \label{SecondProofReference}
\nabla_x \cdot u^{(1)} = \nabla_x \cdot u^n - \Delta t  a_{11} \nabla_x \cdot \nabla_x \cdot \langle \mathcal{A}(v) g^{(1)} \rangle
- \Delta t a_{11} \Delta_x p^{(1)}.
\end{equation}
In the limit $\varepsilon \to 0$, we get for the first stage of \eqref{InternalStagesforp}
\begin{equation}
\Delta t a_{11} \Delta_x p^{(1)} =  \nabla_x \cdot u^n - \Delta t  a_{11} \nabla_x \cdot \nabla_x \cdot \langle \mathcal{A}(v) g^{(1)} \rangle
\end{equation}
and therefore $\nabla_x \cdot u^{(1)} = 0$.
For the inductive step $(i-1) \mapsto i$, the second equation of \eqref{InternalStages} gives
\begin{equation}
\begin{split}
\nabla_x \cdot u^{(i)} &= \nabla_x \cdot u^n 
- \Delta t  \sum_{j = 1}^i \left( a_{ij} \nabla_x \cdot \nabla_x \cdot \langle \mathcal{A}(v) g^{(j)} \rangle \right) - \Delta t  \sum_{j = 1}^{i-1} \left( a_{ij} \Delta_x p^{(j)} \right) 
- \Delta t  a_{ii} \Delta_x p^{(i)}.
\end{split}
\end{equation}
Using \eqref{InternalStagesforp} and the induction hypothesis $\nabla_x \cdot u^{(i-1)} = 0$ gives in the limit 
\begin{equation}
\begin{split}
\Delta t a_{ii} \Delta_x p^{(i)}  &= \nabla_x \cdot  u^n - \Delta t  \sum_{j = 1}^i \left( a_{ij} \nabla_x \cdot \nabla_x \cdot \langle \mathcal{A}(v) g^{(j)} \rangle \right) - \Delta t  \sum_{j = 1}^{i-1} \left( a_{ij} \Delta_x p^{(j)} \right).
\end{split}
\end{equation}
and therefore we obtain $\nabla_x \cdot u^{(i)} = 0$ in the incompressible limit.
By using the following properties
\begin{equation*}\begin{split}
\nabla_x \cdot \langle \mathcal{A}(v) Q \rangle &= \frac{1}{\tau}\left( \nabla_x \cdot \mathcal{A}(u)\right) = \frac{1}{\tau} \left((\nabla_x \cdot u) u + (u \cdot \nabla_x) u - \nabla_x \frac{|u|^2}{d} \right)\\
\nabla_x \cdot \langle \mathcal{A}(v) \otimes \mathcal{A}(v) \rangle : \nabla_x u &= \Delta_x u + \nabla_x(\nabla_x \cdot u) - \frac{2}{d} \nabla_x (\nabla_x \cdot u)
\end{split}
\end{equation*}
as well as the fact that all internal stages of $u$ are divergence-free in the incompressible limit, we obtain
\begin{equation}
\begin{split} \label{FinalForu_}
u^{(i)} &= u^n 
+ \Delta t  \sum_{j=1}^{i-1} \tilde{a}_{ij}  \left( \tau \Delta_x u^{(j)} -  (u^{(j)} \cdot \nabla_x ) u^{(j)}\right) 
+ \Delta t  \sum_{j=1}^{i-1} \tilde{a}_{ij} \nabla_x \frac{|u^{(j)}|^2}{d} 
- \Delta t  \sum_{j = 1}^i  a_{ij} \nabla_x p^{(j)}.
\end{split}
\end{equation}
Using Bernoulli's relation finally gives 
\begin{equation}
\begin{split} \label{FinalForu}
u^{(i)} &= u^n 
+ \Delta t  \sum_{j=1}^{i-1} \tilde{a}_{ij}  \left( \tau \Delta_x u^{(j)} -  (u^{(j)} \cdot \nabla_x ) u^{(j)}\right) 
- \Delta t  \sum_{j = 1}^i  a_{ij} \nabla_x p^{(j)},
\end{split}
\end{equation}
where $p$ now denotes the static pressure.
We now consider the stage equations for $p$. Taking the limit $\varepsilon \to 0$ and using the divergence-free condition for all internal stages of $u$ as well as Bernoulli's relation, we get
\begin{equation}\begin{split} \label{FinalForp}
  \sum_{j = 1}^{i} \left( a_{ij} \Delta_x p^{(j)} \right)  &=  \frac{1}{\Delta t}\nabla_x \cdot \left( u^n + \Delta t  \sum_{j=1}^{i-1} \tilde{a}_{ij}  
 \left( \tau \Delta_x u^{(j)} -  (u^{(j)} \cdot  \nabla_x ) u^{(j)}\right)  \right).
\end{split}
\end{equation} 
Finally, we consider the stage equations for $T$.
\begin{equation}
		T^{(i)} = T_*^{(i)}- \Delta t  \sum_{j = 1}^i \left(  \frac{2}{d \varepsilon} a_{ij} \nabla_x \cdot u^{(j)}\right) 
\end{equation}
with
\begin{equation}
	\label{Tstar}
	T_*^{(i)} = T^n - \Delta t  \sum_{j = 1}^i \left( \frac{1}{d} a_{ij} \nabla_x \cdot \langle \mathcal{B}(v) g^{(j)} \rangle  \right) .
\end{equation}
 In the limit $\varepsilon \to 0$ by inserting \eqref{InternalStagesForg} into \eqref{Tstar} and using orthogonality of $\A(v)$ and $\B(v)$, we obtain
\begin{equation}
\label{eq:T_star_stage}
\begin{split}
T_*^{(i)} = \, & T^n - \frac{\Delta t}{d} \sum_{j = 1}^{i-1} a_{ij} \nabla_x \cdot \langle \mathcal{B}(v) g^{(j)} \rangle \\
& + \frac{\Delta t}{d} \sum_{j=1}^{i-1} \tilde{a}_{ij} \nabla_x \cdot \left[ \frac{\tau}{2} \langle \mathcal{B}(v) \otimes \mathcal{B}(v) \rangle \nabla_x T^{(j)} - (d+2)(u^{(j)}T^{(j)}) \right] \\
& + \frac{\Delta t}{d} \sum_{j = 1}^{i-1} a_{ij} \nabla_x \cdot \langle \mathcal{B}(v) g^{(j)} \rangle.
\end{split}
\end{equation}
By using the following properties 
\begin{equation}
\begin{split}
\langle \mathcal{B}(v) \otimes \mathcal{B}(v) \rangle = 2(d+2)I , \;\; 
\langle \mathcal{B}(v) Q \rangle = \frac{d+2}{\tau} (uT)
\end{split}
\end{equation}
we get
\begin{equation}\begin{split} 
T_*^{(i)} &= T^n + \Delta t \frac{d+2}{d} \sum_{j=1}^{i-1} \tilde{a}_{ij} \left( \tau \Delta_x T^{(j)} - \nabla_x \cdot (u^{(j)}T^{(j)}) \right).
\end{split}
\end{equation}
Moreover, 
for $\epsilon \rightarrow 0$ we have 
\begin{align*}
T^{(i)} - T^n &= T^{(i)} - T_*^{(i)} + T_*^{(i)} - T^n \\
&= \left( -\frac{2}{d+2} + 1 \right) (T_*^{(i)} - T^n) \\
&= \Delta t \sum_{j=1}^{i-1} \tilde{a}_{ij} \left[ \tau \Delta_x T^{(j)} - \nabla_x \cdot (u^{(j)} T^{(j)}) \right].
\end{align*}

due to
\begin{eqnarray*}
	T^{(i)}-T_*^{(i)} 	
	= -\frac{2}{d+2}(T_*^{(i)}  -T^n )
\end{eqnarray*}
as previously and, altogether
\begin{equation}\begin{split} \label{FinalForT}
		T^{(i)} &= T^n + \Delta t\sum_{j=1}^{i-1} \tilde{a}_{ij} \left( \tau \Delta_x T^{(j)} - \nabla_x \cdot (u^{(j)}T^{(j)}) \right).
	\end{split}
\end{equation}
Since we are considering a GSA IMEX Runge Kutta scheme, the last internal stages of \eqref{FinalForu}, \eqref{FinalForp} and \eqref{FinalForT} form a consistent high order Implicit-Explicit time discretization of the limit equations.
\end{proof}

\begin{theorem}
	If the IMEX method is of type CK and satisfies the GSA property and the initial velocity field is divergence free, then in the fluid dynamic limit $\varepsilon \rightarrow 0$ the IMEX RK scheme \eqref{InternalStages} becomes a consistent discretization for the incompressible Navier Stokes equations.
\end{theorem}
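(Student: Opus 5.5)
The plan is to reuse the argument of Theorem \ref{ThmFullBGKTypeA} stage by stage. The only new issue is the first stage, where for a CK scheme $a_{11}=0$ (and hence $c_1=0$). Stages $i\ge 2$ involve the invertible submatrix $\hat A$, so $a_{ii}\neq 0$ there.

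\textbf{First stage.} With $a_{11}=0$ the implicit sums in \eqref{InternalStages} are empty, and the $g$-update has no explicit contribution either, since $\tilde a_{1j}=0$. Hence
\[
\rho^{(1)}=\rho^n,\quad u^{(1)}=u^n,\quad T^{(1)}=T^n,\quad p^{(1)}=p^n,\quad g^{(1)}=g^n .
\]
Two consequences follow.
\begin{enumerate}
\item The incompressibility constraint cannot be produced by a pressure solve at this stage; it must be inherited. This is why the hypothesis that the initial velocity is divergence free is needed. By induction over time steps, $\nabla_x\cdot u^n=0$ for all $n$: at $n=0$ by assumption, and afterwards because GSA makes $u^{n+1}=u^{(s)}$, which we show below is divergence free.
\item The stage-one value $g^{(1)}=g^n$ is not slaved to the macroscopic quantities at that stage. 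It is nevertheless well prepared: by GSA it is the last stage $g^{(s)}$ of the previous step, for which $a_{ss}\neq0$, so \eqref{InternalStagesForg} gives, as $\varepsilon\to0$, the local Chapman--Enskog form
\[
g=-\tau\nabla_x\cdot[\mathcal A u+\tfrac{\mathcal B}{2}T]+\tau Q .
\]
For the very first step I would either assume well-prepared initial data or note that $g^{(1)}$ enters the later stages only through terms weighted by $a_{i1}$.
\end{enumerate}

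\textbf{Stages $i\ge2$.} Here I would repeat the computations \eqref{InternalStagesForg}--\eqref{FinalForT} verbatim, since they only use $a_{ii}\neq0$. The expansion of $g^{(i)}$ divides by $\varepsilon^2\tau+a_{ii}\Delta t$. Inserting it into the $u$- and $T$-stages cancels the implicit $a_{ij}\langle\mathcal A g^{(j)}\rangle$ and $a_{ij}\langle\mathcal B g^{(j)}\rangle$ sums, now including the $j=1$ terms with $a_{i1}$. These cancellations are algebraic identities in the stage formula and do not require $g^{(1)}$ to be in local equilibrium. One checks this directly: the term $-\Delta t\,a_{i1}g^{(1)}/(\varepsilon^2\tau+a_{ii}\Delta t)$ is exactly what is used.

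The pressure Helmholtz equation \eqref{InternalStagesforp} then has leading coefficient $\tfrac{d+2}{d}a_{ii}^2\neq0$. The divergence induction runs from the base case $\nabla_x\cdot u^{(1)}=\nabla_x\cdot u^n=0$, in place of the type A base case \eqref{SecondProofReference}. In the limit this yields $\nabla_x\cdot u^{(i)}=0$ for all $i$.

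\textbf{Conclusion and main obstacle.} Using the identities for $\langle\mathcal A\otimes\mathcal A\rangle$, $\langle\mathcal A Q\rangle$, $\langle\mathcal B\otimes\mathcal B\rangle$ and $\langle\mathcal BQ\rangle$, together with Bernoulli's relation, one again obtains \eqref{FinalForu}, \eqref{FinalForp} and \eqref{FinalForT} for $i\ge2$; the sums now start effectively from the explicit stage $j=1$ at time $t^n$. Stiff accuracy then identifies $u^{n+1},T^{n+1},p^{n+1}$ with the last stage, which is a CK-type IMEX projection discretization of \eqref{limit0}.

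I expect the main obstacle to be the careful bookkeeping of the $j=1$ contributions. In particular, one must verify that the $\rho$/$T$ relation $T^{(i)}-T^{(i)}_*=-\tfrac{2}{d+2}(T^{(i)}_*-T^n)$ still holds. This requires $\nabla_x(\rho^{(i)}+T^{(i)})\to0$ and $\varepsilon p^{(1)}=\varepsilon p^n\to0$, i.e.\ that the data at $t^n$ already satisfy the Boussinesq relation $\nabla_x(\rho+T)=O(\varepsilon)$. As with incompressibility, this compatibility condition is part of well-preparedness and is propagated by the scheme.
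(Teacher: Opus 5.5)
Your proposal is correct and is essentially the paper's proof. The paper reruns the type~A argument and replaces the base case \eqref{SecondProofReference}, which is unavailable because $a_{11}=0$, by $\nabla_x\cdot u^{(1)}=\nabla_x\cdot u^n=0$ from the divergence-free data, exactly as you do; your extra bookkeeping (propagating this property and $\nabla_x(\rho+T)=O(\varepsilon)$ through $u^{n+1}=u^{(s)}$ and $p^{n+1}=p^{(s)}$) just makes explicit what the paper leaves implicit.

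One side remark of yours is inaccurate, though not load-bearing. In the limit the scheme only enforces $\sum_{j\le i}a_{ij}g^{(j)}=\tau\sum_{j<i}\tilde a_{ij}\bigl(Q^{(j)}-\nabla_x\cdot[\mathcal{A}(v)u^{(j)}+\tfrac{\mathcal{B}(v)}{2}T^{(j)}]\bigr)$. So $g^{(s)}$ is not the Chapman--Enskog expression at $(u^{(s)},T^{(s)})$, and for $a\neq0$ it still depends on $g^n$; but, as you correctly observe, only these combinations enter the macroscopic stages, so the argument does not rely on that claim.
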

The proof mainly follows the same line of reasoning as in Theorem~\ref{ThmFullBGKTypeA}. At \eqref{SecondProofReference}, we have to use that the initial velocity field is divergence free in order to get divergence free intermediate velocity fields, since $a_{11}= 0$ for IMEX methods of type CK.

The algorithm for solving the system of equations \eqref{eq:moments2} will contain the above discussed class of GSA IMEX Runge Kutta schemes and is summarized in Algorithm \ref{IMEX}.

\begin{algorithm}\caption{An Asymptotic Preserving IMEX Runge-Kutta scheme for the low Mach limit} \label{IMEX}\scriptsize
\begin{enumerate}
\item Compute $g^{(i)}$ by solving
\begin{equation}
\label{FirstStep}
\begin{split}
g^{(i)} = \, &\frac{\varepsilon^2 \tau}{\varepsilon^2 \tau + a_{ii} \Delta t} g^n \\
&- \frac{\Delta t \tau}{\varepsilon^2 \tau + a_{ii} \Delta t} \sum_{j=1}^{i-1} \tilde{a}_{ij} \Biggl( \nabla_x \cdot \left[ \mathcal{A}(v) u^{(j)} + \frac{\mathcal{B}(v)}{2} T^{(j)} + \varepsilon (I-P)(vg^{(j)}) \right] - Q^{(j)} \Biggr) \\
&- \frac{\Delta t}{\varepsilon^2 \tau + a_{ii} \Delta t} \sum_{j=1}^{i-1} a_{ij} g^{(j)}.
\end{split}
\end{equation}

\item Compute ${\tilde p^{(i)}}$ by solving the following Helmholtz equation
\begin{equation}
\label{SecondStep}
\begin{split}
\frac{d+2}{d}  a_{ii}^2\Delta_x \tilde{p}^{(i)} - \frac{\varepsilon^2}{\Delta t^2} \tilde{p}^{(i)} = \, & -\frac{\varepsilon}{\Delta t^2 } p^{(i)*} \\
& + \frac{d+2}{d \Delta t } \sum_{j = 1}^{i-1} a_{ij} \nabla_x \cdot u^{(j)} + \frac{(d+2) a_{ii}}{d \Delta t } \nabla_x \cdot u^{(i)*} \\
& - \frac{(d+2) a_{ii}}{d } \sum_{j=1}^{i-1} a_{ij} \Delta_x \tilde{p}^{(j)}.
\end{split}
\end{equation}

with
\begin{equation}
\begin{split}
{p^{(i)}}^* &= p^n - \Delta t  \sum_{j = 1}^i \left( a_{ij} \frac{1}{d} \nabla_x \cdot \langle \mathcal{B}(v) g^{(j)} \rangle \right), \\
{u^{(i)}}^* &= u^n - \Delta t  \sum_{j = 1}^i \left( a_{ij} \nabla_x \cdot \langle \mathcal{A}(v) g^{(j)} \rangle \right).
\end{split}
\end{equation}
\item Compute $u^{(i)}$ by solving
\begin{equation}
\begin{split}\label{ThirdStep}
{u^{(i)}} = u^n - \Delta t  \sum_{j = 1}^i \left( a_{ij} \nabla_x \cdot \langle \mathcal{A}(v) g^{(j)} \rangle \right) -\Delta t \sum_{j=1}^{i} a_{ij} \nabla_x \tilde p^{(j)}.
\end{split}
\end{equation}
\item Compute $\rho^{(i)}$ by solving
\begin{equation}
\begin{split}\label{FifthStep}
{\rho^{(i)}} =   \rho^n  - \frac{\Delta t}{\varepsilon} \sum_{j=1}^{i} a_{ij} \nabla_x \cdot u^{(j)} = \rho^n  + \frac{d}{d+2}
(\varepsilon \tilde p^{(i)} - {p^{(i)}}^* )\end{split}
\end{equation}
\item Compute $T^{(i)}$ by solving
\begin{equation}
\label{FourthStep}
\begin{split}
T^{(i)} &= T^n - \frac{\Delta t}{d} \sum_{j = 1}^i a_{ij} \nabla_x \cdot \langle \mathcal{B}(v) g^{(j)} \rangle - \frac{2 \Delta t}{d \varepsilon} \sum_{j=1}^{i} a_{ij} \nabla_x \cdot u^{(j)} \\
&= T^n + p^{(i)*} - p^n + \frac{2}{d} (\rho^{(i)} - \rho^n).
\end{split}
\end{equation}

\end{enumerate}
\label{alg1FullBGK}
\end{algorithm}

\subsection{Numerical treatment of the Collision Term}
The calculation of the second order perturbation term $Q$, defined in section two by the following relation
\[
Q = \frac{1}{\varepsilon^2 \tau} \left( \frac{M_F}{M} - 1 - \varepsilon Pf\right),
\]
is presenting a significant challenge. While the formula is analytically sound, its direct implementation in floating--point arithmetic is leading to severe cancellation errors as the system approaches the hydrodynamic limit $\varepsilon \to 0$. In order to ensure the robustness of the solver, a case distinction based on the magnitude of $\varepsilon$ is needed.

In this limit, the local Maxwellian $M_F$ approaches $M$. 
Specifically, $M_F$ can be approximated as $M$ plus a small perturbation in the direction of the hydrodynamic manifold, leading to the following expansion:
$$\frac{M_F}{M} = 1 + \varepsilon P_f + \mathcal{O}(\varepsilon^2)$$
Consequently, for very small values of $\varepsilon$, the ratio $\frac{M_F}{M}$ is nearly indistinguishable from $1 + \epsilon P_f$ in finite-precision arithmetic leading to severe cancellation errors, that are amplified by $\varepsilon^2$ in the denominator.

So, to circumvent this cancellation error, we employ a Taylor expansion of the ratio $\frac{M_F}{M_{101}}$ around $\varepsilon = 0$. Expanding up to $\mathcal{O}(\varepsilon^2)$ allows for the algebraic cancellation of the leading terms ($1$ and $\varepsilon P_f$), leaving $Q^{taylor}$ as the second-order coefficient ensuring that the calculation of $Q$ remains robust even as $\varepsilon$ approaches the hydrodynamic limit.

This means, for $\varepsilon < 10^{-4}$, we compute
\begin{equation}
\begin{split}
Q^{taylor} = \frac{1}{\tau} \biggl( & \frac{1}{2}(v \cdot u)^2 - \frac{1}{2d}|v|^2 |u|^2 + T(v \cdot u) \left( \frac{|v|^2 - (d+2)}{2} \right) \\
& + T^2 \left( \frac{|v|^4}{8} - \frac{2+d}{4}|v|^2 + \frac{d(d+2)}{8} \right) \biggr).
\end{split}
\end{equation}

\section{Discretization of Phase Space and Velocity Space}
\label{sec3b}
\subsection{Phase Space discretization}
In this section, we restrict ourselves to the case $d = 2$. In order to illustrate the basic principle of our spatial discretization and maintain readability, we describe a first-order finite difference scheme based on Algorithm~\ref{alg1FullBGK}. Our approach for the space discretization follows the methodology proposed in \cite{BOSCARINO}.
We introduce a mesh in the two-dimensional physical space with spacing $\Delta x$ and $\Delta y$, and define all variables at the cell centers of the resulting grid. 
\begin{itemize}
	\item The spatial discretization of the first order derivatives  $\nabla_x \cdot \langle \mathcal{A}(v) g \rangle$  in  \eqref{ThirdStep} and of $\nabla_x \cdot (I-P) (vg)$  in \eqref{FirstStep} of Algorithm~\ref{alg1FullBGK} are treated by local Lax Friedrichs flux functions. Thus, for $\nabla_x \cdot \langle \mathcal{A}(v) g \rangle$, we get 
\begin{equation}
\begin{split}
D_{LF} \cdot \langle \mathcal{A}(v) g \rangle = \, & \frac{1}{\Delta x} \left( \widehat{\langle \mathcal{A}(v) g \rangle}_{1_{i+1/2,j}} - \widehat{\langle \mathcal{A}(v) g \rangle}_{1_{i-1/2,j}} \right) \\
& + \frac{1}{\Delta y} \left( \widehat{\langle \mathcal{A}(v) g \rangle}_{2_{i,j+1/2}} - \widehat{\langle \mathcal{A}(v) g \rangle}_{2_{i,j-1/2}} \right).
\end{split}
\end{equation}
	with
	\begin{align}\nonumber
		\widehat{\langle \mathcal{A}(v) g \rangle}_{1_{i+1/2,j}} &= \frac{1}{2} \left( \langle \mathcal{A}(v) g \rangle_{1_{i,j}} + \langle \mathcal{A}(v) g \rangle_{1_{i+1,j}} - \alpha (u_{i+1,j} - u_{i,j})\right),\\ 
		\widehat{\langle \mathcal{A}(v) g \rangle}_{2_{i,j+1/2}} &= \frac{1}{2} \left( \langle \mathcal{A}(v) g \rangle_{2_{i,j}} + \langle \mathcal{A}(v) g \rangle_{2_{i,j+1}} - \alpha (u_{i,j+1} - u_{i,j})\right)
	\end{align}
	with $\alpha = 1$ and the subscripts $1$ and $2$ denoting the first and second column of the matrix. The space discretization for $\nabla_x \cdot (I-P)(vg)$ is done analogously.
		\item The remaining first order derivatives are discretized by central difference approximations, e.g. $D_0 \cdot \langle \mathcal{B}(v) g \rangle$ for $\nabla_x \cdot \langle \mathcal{B}(v) g \rangle$.
	\begin{equation}
\label{eq:central_diff_split}
\begin{split}
D_{0} \cdot \langle \mathcal{B}(v) g \rangle = \, &\frac{1}{2\Delta x} (\langle \mathcal{B}(v) g \rangle_{i+1,j} - \langle \mathcal{B}(v) g \rangle_{i-1,j}) \\
&+ \frac{1}{2\Delta y} (\langle \mathcal{B}(v) g \rangle_{i,j+1} - \langle \mathcal{B}(v) g \rangle_{i,j-1}).
\end{split}
\end{equation}
	\item 	The Laplacian operator $\Delta_x p$ in \eqref{SecondStep} is given by
	\begin{align}
		D_{\Delta} p = \frac{p_{i+1,j} - 2p_{i,j}+ p_{i-1,j}}{\Delta x^2} + 
		\frac{p_{i,j+1} - 2p_{i,j}+ p_{i,j-1}}{\Delta y^2}.
	\end{align}
\end{itemize}
The extension to high–order space discretization is achieved by replacing all local Lax–Friedrichs flux functions with high order shock capturing schemes based on WENO reconstruction \cite{Shu}, while all central difference approximations are modified accordingly to attain fourth order accuracy.
We omit the details for simplicity.
We note also that the choice of the spatial discretization is not restricted to the above described Finite-Volume approach. We have also implemented a generalized Finite Difference discretization for arbitrary  grids based on a least square determination of the 
derivatives yielding similar results.

\subsection{Discretization of Velocity Space}
The transition from a continuous kinetic description to a computationally tractable model necessitates the use of Discrete Velocity Methods (DVM). To maintain high algebraic accuracy in the evaluation of moments we employ Gauss--Hermite quadrature for the discretization of the integrals.

For a given number of discrete velocities, Gauss--Hermite quadrature provides the maximum degree of polynomial exactness, specifically up to degree $2N-1$.
 Since the Maxwell distribution is characterized by a Gaussian weight function, the Gauss--Hermite framework allows for the exact recovery of equilibrium moments at the discrete level, provided the quadrature order is sufficiently high.
  It minimizes the number of required discrete velocity points while preserving the essential symmetry and conservation properties of the kinetic system, effectively mitigating the "curse of dimensionality" inherent in velocity space integration \cite{PhysRevE.73.056702}.
  Remember that 
  \[
  \langle f \rangle = \int_{\mathbb{R}^d}f(v) M(v)\ dv,
  \]
  with
  \[
  M(v) = \frac{1}{(2\pi)^{d/2}} \exp \left( - \frac{|v|^2}{2}\right).
  \]
  Since Gauss--Hermite quadrature serves as a specialized numerical integration technique designed to evaluate integrals of the form 
\[
\int_{-\infty}^\infty e^{-x^2} f(x)\ dx,
\]
we obtain by transforming the nodes $v_i = \sqrt{2}\xi_i$
\[
\langle f \rangle \approx \sum_{i_{1}}^{N_1} \dots \sum_{i_{d}}^{N_d} \frac{1}{\pi^{d/2}} w_{i_1} \dots w_{i_d} f(\sqrt{2} \xi_1 , \dots , \sqrt{2} \xi_d ), 
\]
where the weights are calculated by using the formula 
\[
w_i = \frac{2^{n-1} n! \sqrt{\pi}}{n^2 [H_{n-1}(\xi_i) ]^2}
\]
and $H_n(x)$ denotes the $n$-th degree Hermite polynomial.

So, e.g. for $\langle \mathcal{A}(v) g \rangle$ we obtain in the case $d=2$
\[
\langle \mathcal{A}(v) g \rangle \approx \sum_{i}^{N}\sum_{j}^{N} \frac{w_i w_j}{\pi} \mathcal{A}(\sqrt{2} \xi_i,\sqrt{2} \xi_j) g(x,\sqrt{2} \xi_i,\sqrt{2} \xi_j,t)
\]
with $x \in D \subset \mathbb{R}^2$ and $t \in [0,\infty)$ and $N$ denotes the number of grid points used to discretize velocity space. 
\section{Numerical tests}\label{sec4}
In this section, we present a set of numerical experiments designed to assess the performance of the proposed schemes. The tests focus on classical benchmark problems in one and two dimensions and aim to verify accuracy, stability, and robustness across different regimes.
\subsection{IMEX-RK schemes}\label{IMEXtabs}
For convergence studies, the following second and third order IMEX-RK methods are used: 
\begin{itemize}
    \item \textbf{Second-order GSA IMEX-RK scheme.}
    For the second-order scheme, we use the following Butcher tableaus:
    \begin{equation}
    \label{SecondOrderIMEX}
    \text{Explicit: } 
    \begin{array}{c|ccc}
        0 & 0 & 0 & 0 \\
        c & c & 0 & 0 \\
        1 & 1-1/(2c) & 1/(2c) & 0 \\ \hline
          & 1-1/(2c) & 1/(2c) & 0
    \end{array} \quad
    \text{Implicit: }
    \begin{array}{c|ccc}
        0 & 0 & 0 & 0 \\
        c & 0 & c & 0 \\
        1 & 0 & 1-\gamma & \gamma \\ \hline
          & 0 & 1-\gamma & \gamma
    \end{array},
    \end{equation}
    with $\gamma = (c-1/2)/(c-1)$ and $c = 1-1/\sqrt{2}$; see \cite{Bosc}.

    \item \textbf{Third-order GSA IMEX-RK scheme.}
    The third-order scheme is defined by the following tableaus:
    \begin{equation}
    \label{ThirdOrderIMEX}
    \text{Explicit: } 
    \begin{array}{c|ccccc}
        0           & 0 & 0 & 0 & 0 & 0 \\ 
        1/2         & 1/2 & 0 & 0 & 0 & 0 \\
        2/3         & 11/18 & 1/18 & 0 & 0 & 0 \\
        1/2         & 5/6 & -5/6 & 1/2 & 0 & 0 \\
        1           & 1/4 & 7/4 & 3/4 & -7/4 & 0 \\ \hline
                    & 1/4 & 7/4 & 3/4 & -7/4 & 0
    \end{array}
    \end{equation}
    \begin{equation*}
    \text{Implicit: }
    \begin{array}{c|ccccc}
        0           & 0 & 0 & 0 & 0 & 0 \\
        1/2         & 0 & 1/2 & 0 & 0 & 0 \\ 
        2/3         & 0 & 1/6 & 1/2 & 0 & 0 \\
        1/2         & 0 & -1/2 & 1/2 & 1/2 & 0 \\
        1           & 0 & 3/2 & -3/2 & 1/2 & 1/2 \\ \hline
                    & 0 & 3/2 & -3/2 & 1/2 & 1/2
    \end{array}.
    \end{equation*}
\end{itemize}
together with second, third and fifth order WENO reconstructions and central difference methods for the space discretization. 

\subsection{The direct numerical method for the  BGK equation}

For comparison of the results we use a direct numerical scheme for 
the BGK equation (\ref{eq:scaled}) explained in detail in \cite{TKR}. 
It is based on an explicit treatment of the advection term and an implicit treatment of the collision terms and  a second order IMEX-WENO method for the spatial discretization. The  scaled advection term   requires in the present case a time step restriction of the order $\epsilon$.
It is chosen according to (\ref{CFL})
with CFL number $CFL = 0.5$. Here, $v_{max} $ is the maximal absolute value of the velocities used for the discretization of the velocity space in the BGK model.


\subsection{Accuracy, Convergence and CPU times  for 1-D Test Problems}
\subsubsection{Accuracy}
In order to check the accuracy of our scheme, we consider the following one-dimensional smooth periodic test problem. Let $x \in [0,1]$ and solve Algorithm~\ref{alg1FullBGK} augmented with the smooth initial data 
\[
\rho(x,0) = 0.5 + \sin(2\pi x)
\]
and $u(x,0) = 0$, $T(x,0) = 0$ and $p = (\rho + T) /\eps$ as well as periodic boundary conditions. In Tables~\eqref{tab:convergence_1Dutau0}--\eqref{tab:convergence_1Dptau0dot01}, we show the error norms of Algorithm \ref{alg1FullBGK}, discretized with IMEX RK2 \eqref{SecondOrderIMEX} as well as WENO3 and second order central differences, at the pre-shock time $0.1$ using $CFL = 0.75$. As a reference solution we used the solution obtained by the finest mesh of $1025$ grid points. As expected, the absolute errors for $p$ are larger than for the other quantities because of the $\mathcal{O}(1/\varepsilon)$ scaling. However, a relative error of only approximately $0.3 \%$ at $N=257$ confirms the high precision of the numerical procedure.
The results confirm that the method achieves the expected third order of convergence.

\begin{table}[htbp]
    \centering
    \small
    \begin{tabular}{@{} l c cc cc cc @{}}
        \toprule
        \multirow{2}{*}{\textbf{$\varepsilon$}} & \multirow{2}{*}{$N$} & \multicolumn{2}{c}{\textbf{$L^1$ Error}} & \multicolumn{2}{c}{\textbf{$L^2$ Error}} & \multicolumn{2}{c}{\textbf{$L^\infty$ Error}} \\
        \cmidrule(lr){3-4} \cmidrule(lr){5-6} \cmidrule(lr){7-8}
        & & Error & EOC & Error & EOC & Error & EOC \\
        \midrule
        \multirow{4}{*}{\textbf{1}} 
		& 65  & 3.161e-4 & -- & 3.832e-4 & -- & 7.864e-4 & -- \\
       & 129  & 7.490e-5 & 2.0775 & 8.558e-5 & 2.1627 & 1.428e-4 & 2.4605 \\
        & 257  & 1.906e-5 & 1.9744 & 2.020e-5 & 2.0828 & 2.941e-5 & 2.2803 \\
        & 513  & 3.991e-6 & 2.2556 & 4.216e-6 & 2.2606 & 5.489e-6 & 2.4218 \\
        
        \addlinespace[0.3cm]
        \multirow{4}{*}{\textbf{0.01}} 
		& 65  & 2.950e-2 & -- & 3.620e-2 & -- & 5.88e-2 & -- \\
       & 129  & 5.40e-3 & 2.4500 & 9.00e-3 & 2.0031 & 2.17e-2 & 1.4359 \\
        & 257  & 7.621e-4 & 2.8238 & 1.30e-3 & 2.7731 & 3.60e-3 & 2.6100 \\
        & 513  & 1.690e-4 & 2.1722 & 2.688e-4 & 2.2950 & 6.914e-4 & 2.3640 \\
        
        
        \bottomrule
    \end{tabular}
    \caption{Convergence analysis in 1D with different values for $\varepsilon$ and $\tau = 0$ at $t = 0.1$ showing variable $u$ and using IMEX RK2 with WENO3 and second order central difference approximations. EOC denotes the Experimental Order of Convergence.}
    \label{tab:convergence_1Dutau0}
\end{table}

\begin{table}[htbp]
    \centering
    \small
    \begin{tabular}{@{} l c cc cc cc @{}}
        \toprule
        \multirow{2}{*}{\textbf{$\varepsilon$}} & \multirow{2}{*}{$N$} & \multicolumn{2}{c}{\textbf{$L^1$ Error}} & \multicolumn{2}{c}{\textbf{$L^2$ Error}} & \multicolumn{2}{c}{\textbf{$L^\infty$ Error}} \\
        \cmidrule(lr){3-4} \cmidrule(lr){5-6} \cmidrule(lr){7-8}
        & & Error & EOC & Error & EOC & Error & EOC \\
        \midrule
        \multirow{4}{*}{\textbf{1}} 
		& 65  & 4.770e-4 & -- & 5.979e-4 & -- & 8.881e-4 & -- \\
       & 129  & 9.313e-5 & 2.3569 & 1.034e-4 & 2.5307 & 2.047e-4 & 2.1172 \\
        & 257  & 1.949e-5 & 2.2561 & 2.286e-5 & 2.1782 & 2.462e-5 & 3.0552 \\
        & 513  & 3.543e-6 & 2.4599 & 4.492e-6 & 2.3475 & 3.743e-6 & 2.7179 \\
        
        \addlinespace[0.3cm]
        \multirow{4}{*}{\textbf{0.01}} 
		& 65  & 15.807 & -- & 18.874 & -- & 14.769 & -- \\
       & 129  & 1.3916 & 3.5058 & 1.604 & 3.5566 & 1.170 & 3.6577 \\
        & 257  & 2.164e-1 & 2.6848 & 2.518e-1 & 2.6713 & 1.84e-1 & 2.6663 \\
        & 513  & 3.870e-2 & 2.4848 & 4.61e-2 & 2.4508 & 4.584e-2 & 2.0077 \\

        \bottomrule
    \end{tabular}
    \caption{Convergence analysis in 1D with different values for $\varepsilon$ and $\tau = 0$ at $t = 0.1$ showing variable $p$ and using IMEX RK2 with WENO3 and second order central difference approximations. EOC denotes the Experimental Order of Convergence. Since $p$ scales as $O(1/\varepsilon)$, absolute error magnitudes are predictably higher. However, a relative error of only $\approx 0.3 \%$ at $N=257$ confirms the high precision of the numerical procedure.}
    \label{tab:convergence_1Dptau0}
\end{table}

\begin{table}[htbp]
    \centering
    \small
    \begin{tabular}{@{} l c cc cc cc @{}}
        \toprule
        \multirow{2}{*}{\textbf{$\varepsilon$}} & \multirow{2}{*}{$N$} & \multicolumn{2}{c}{\textbf{$L^1$ Error}} & \multicolumn{2}{c}{\textbf{$L^2$ Error}} & \multicolumn{2}{c}{\textbf{$L^\infty$ Error}} \\
        \cmidrule(lr){3-4} \cmidrule(lr){5-6} \cmidrule(lr){7-8}
        & & Error & EOC & Error & EOC & Error & EOC \\
        \midrule
        \multirow{4}{*}{\textbf{1}} 
		& 65  & 2.790e-4 & -- & 3.235e-4 & -- & 5.315e-4 & -- \\
       & 129  & 7.342e-5 & 1.9260 & 8.440e-5 & 1.9387 & 1.369e-4 & 1.9566 \\
        & 257  & 1.842e-5 & 1.9946 & 2.074e-5 & 2.0248 & 3.242e-5 & 2.0782 \\
        & 513  & 3.795e-6 & 2.2791 & 4.261e-6 & 2.2829 & 6.396e-6 & 2.3420 \\
        
        \addlinespace[0.3cm]
        \multirow{4}{*}{\textbf{0.01}} 
		& 65  & 2.730e-2 & -- & 3.390e-2 & -- & 5.45e-2 & -- \\
       & 129  & 4.70e-3 & 2.5363 & 7.50e-3 & 2.1669 & 1.67e-2 & 1.7076 \\
        & 257  & 6.955e-4 & 2.7593 & 1.10e-3 & 2.7624 & 2.40e-3 & 2.8156 \\
        & 513  & 1.218e-4 & 2.5135 & 2.042e-4 & 2.4442 & 5.310e-4 & 2.1583 \\
        \bottomrule
    \end{tabular}
    \caption{Convergence analysis in 1D with different values for $\varepsilon$ and $\tau = 0.01$ at $t = 0.1$ showing variable $u$ and using IMEX RK2 with WENO3 and second order central difference approximations. EOC denotes the Experimental Order of Convergence.}
    \label{tab:convergence_1Dutau0dot01}
\end{table}
\begin{table}[htbp]
    \centering
    \small
    \begin{tabular}{@{} l c cc cc cc @{}}
        \toprule
        \multirow{2}{*}{\textbf{$\varepsilon$}} & \multirow{2}{*}{$N$} & \multicolumn{2}{c}{\textbf{$L^1$ Error}} & \multicolumn{2}{c}{\textbf{$L^2$ Error}} & \multicolumn{2}{c}{\textbf{$L^\infty$ Error}} \\
        \cmidrule(lr){3-4} \cmidrule(lr){5-6} \cmidrule(lr){7-8}
        & & Error & EOC & Error & EOC & Error & EOC \\
        \midrule
        \multirow{4}{*}{\textbf{1}} 
		& 65  & 3.986e-4 & -- & 4.727e-4 & -- & 1.00e-3 & -- \\
       & 129  & 8.199e-5 & 2.2816 & 9.422e-4 & 2.3270 & 2.326e-4 & 2.1734 \\
        & 257  & 1.582e-5 & 2.3732 & 1.774e-5 & 2.4087 & 2.412e-5 & 3.2693 \\
        & 513  & 2.915e-6 & 2.4405 & 3.383e-6 & 2.3910 & 3.547e-6 & 2.7660 \\
        \addlinespace[0.3cm]
        \multirow{4}{*}{\textbf{0.01}} 
		& 65  & 14.76 & -- & 17.47 & -- & 13.40 & -- \\
       & 129  & 1.3323 & 3.4697 & 1.5251 & 3.5180 & 1.1253 & 3.5739 \\
        & 257  & 2.016e-1 & 2.7243 & 2.256e-1 & 2.7569 & 1.771e-1 & 2.6679 \\
        & 513  & 3.490e-2 & 2.5323 & 3.91e-2 & 2.5288 & 2.93e-2 & 2.5953 \\
 \bottomrule
    \end{tabular}
    \caption{Convergence analysis in 1D with different values for $\varepsilon$ and $\tau = 0.01$ at $t = 0.1$ showing variable $p$ and using IMEX RK2 with WENO3 and second order central difference approximations. EOC denotes the Experimental Order of Convergence. Since $p$ scales as $O(1/\varepsilon)$, absolute error magnitudes are predictably higher. However, a relative error of only $\approx 0.3 \%$ at $N=257$ confirms the high precision of the numerical procedure.}
    \label{tab:convergence_1Dptau0dot01}
\end{table}
\subsubsection{Comparison with BGK solver}
We now consider the following test case in 1D. Let $x \in [0,1]$ and consider the following perturbed density profile
\begin{align}\label{InitialVelocity1}
	\rho(x,0) = \begin{cases} 1, & x \in [0,0.2] \\
							 1+\epsilon^2, & x \in (0.2,0.3] \\ 
							  1, & x \in (0.3,0.7] \\
							   1-\epsilon^2, & x \in (0.7,0.8] \\
							    1, & x \in (0.8,1] \\
							  \end{cases}
\end{align}
and $T(x,0) = 0$, $u(x,0) = 0$ and $p(x,0) = (\rho(x,0) + T(x,0))/\varepsilon$. This setup is often used to demonstrate how the numerical scheme handles sharp transitions in density while maintaining stability in the diffusion limit \cite{DegondTang2011,DimarcoLoubereVignal2017}. We discretize the spatial domain into $1001$ and the velocity domain into $50$ grid points and use the IMEX RK2 scheme as well as WENO3 and second order central differences in all computations. Figures~\eqref{fig:1d-1} -- \eqref{fig:1d-3} show that our solution is in good agreement with the reference solution, computed by a standard BGK solver, across various values of $\varepsilon$ and $\tau$.
\begin{figure}[H]
	\centering
	\begin{subfigure}{0.325\textwidth}
		\centering
		\includegraphics[width=\linewidth]{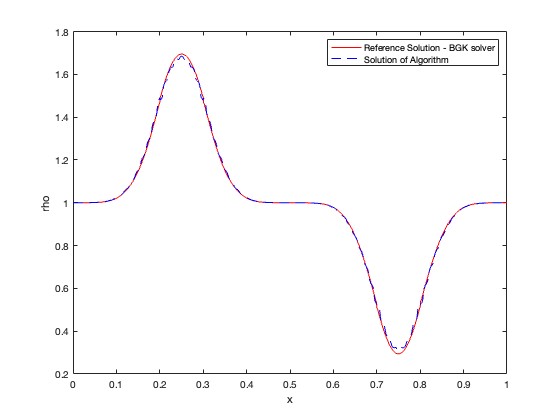}
	\end{subfigure}
	\centering
		\begin{subfigure}{0.325\textwidth}
		\centering
		\includegraphics[width=\linewidth]{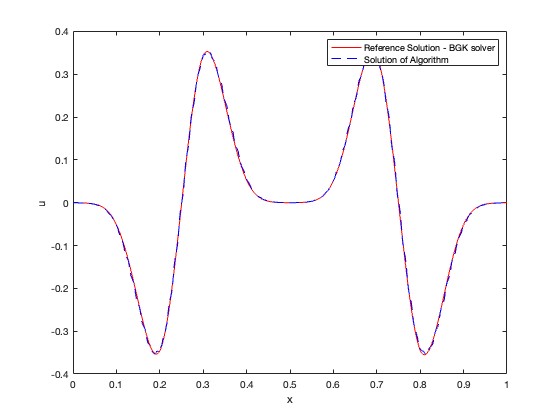}
	\end{subfigure}
	\centering
	\begin{subfigure}{0.325\textwidth}
		\centering
		\includegraphics[width=\linewidth]{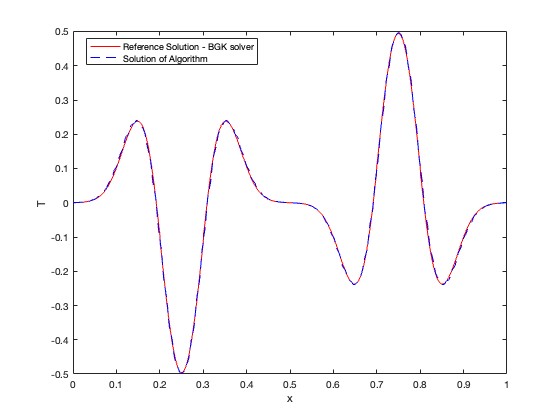}
	\end{subfigure}
	\centering
	\begin{subfigure}{0.325\textwidth}
		\centering
		\includegraphics[width=\linewidth]{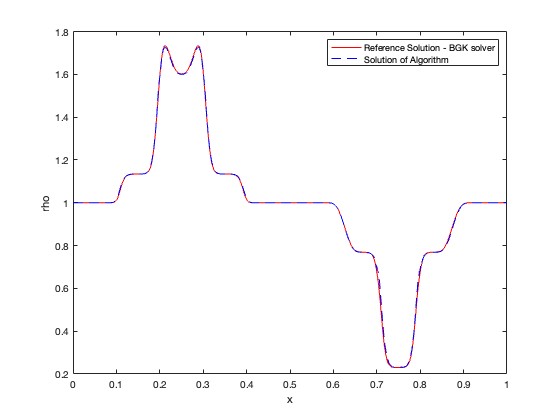}
	\end{subfigure}
		\centering
	\begin{subfigure}{0.325\textwidth}
		\centering
		\includegraphics[width=\linewidth]{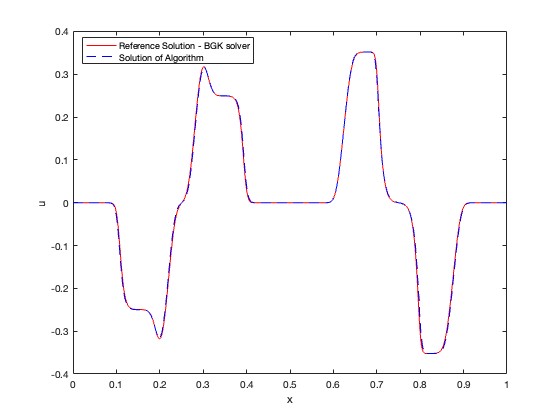}
	\end{subfigure}
	\centering
	\begin{subfigure}{0.325\textwidth}
		\centering
		\includegraphics[width=\linewidth]{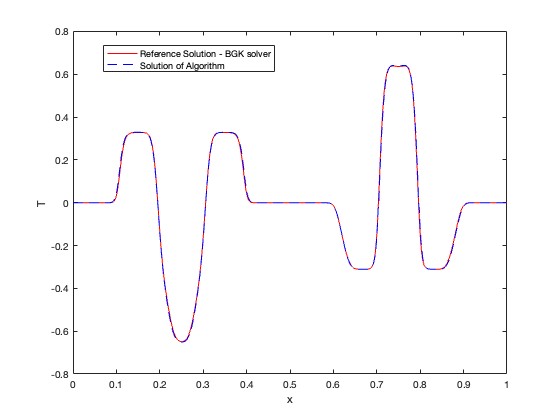}
	\end{subfigure}
	\caption{Comparison of solution to the test case in 1D for $\eps = 1$ and $\tau = 0.1$ (top row) and $\tau = 0.001$ (bottom row) at $t = 0.05$.}
	\label{fig:1d-1}
\end{figure}
\begin{figure}[H]
	\centering
	\begin{subfigure}{0.325\textwidth}
		\centering
		\includegraphics[width=\linewidth]{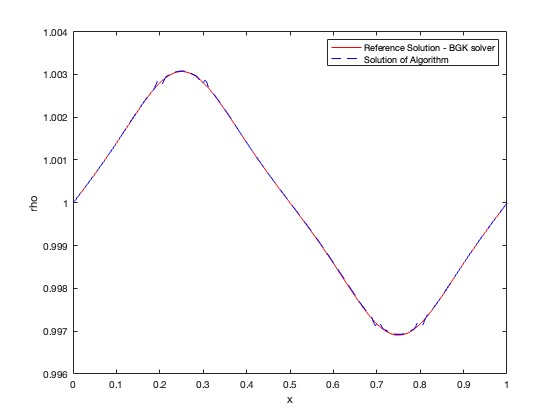}
	\end{subfigure}
	\centering
		\begin{subfigure}{0.325\textwidth}
		\centering
		\includegraphics[width=\linewidth]{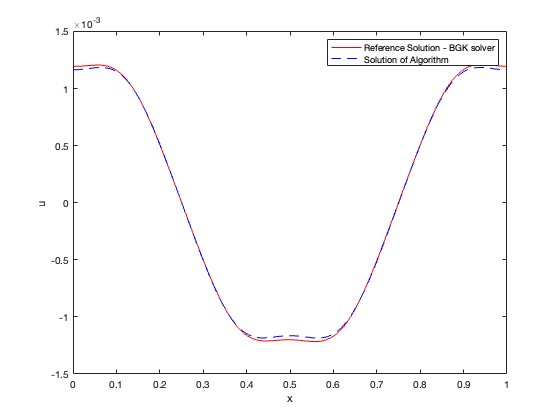}
	\end{subfigure}
	\centering
	\begin{subfigure}{0.325\textwidth}
		\centering
		\includegraphics[width=\linewidth]{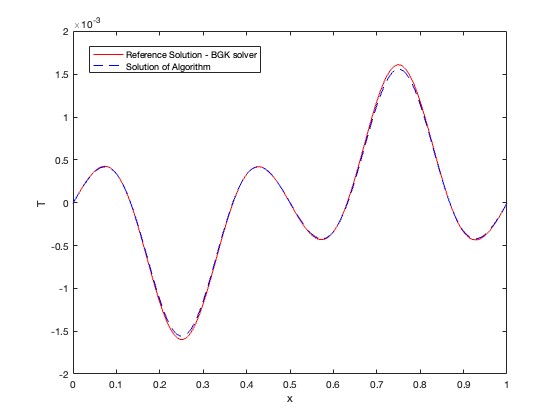}
	\end{subfigure}
	\centering
	\begin{subfigure}{0.325\textwidth}
		\centering
		\includegraphics[width=\linewidth]{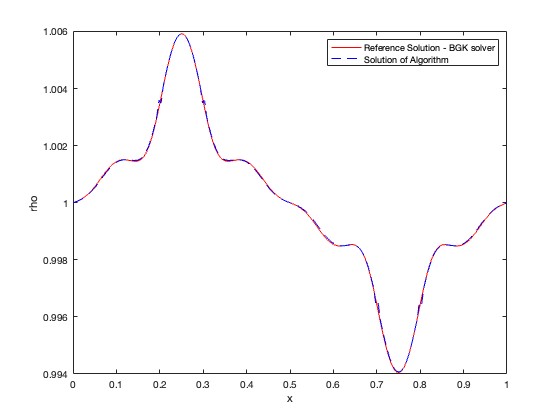}
	\end{subfigure}
		\centering
	\begin{subfigure}{0.325\textwidth}
		\centering
		\includegraphics[width=\linewidth]{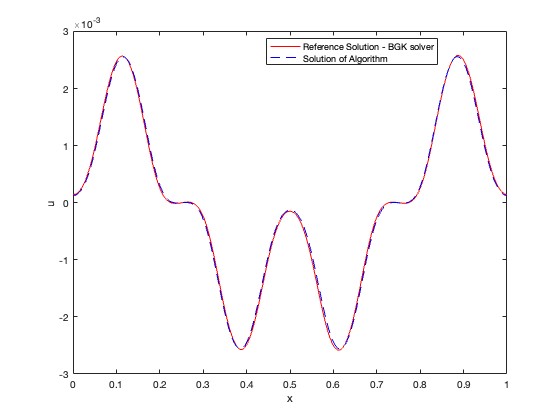}
	\end{subfigure}
		\centering
	\begin{subfigure}{0.325\textwidth}
		\centering
		\includegraphics[width=\linewidth]{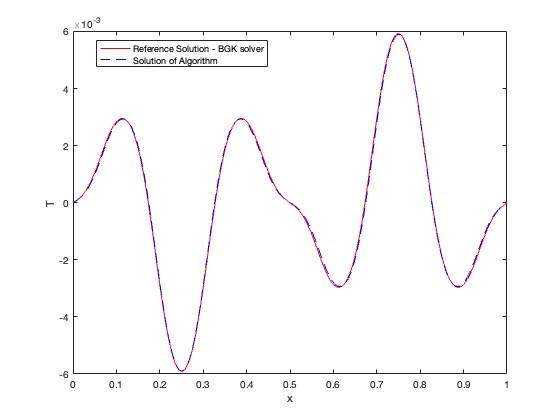}
	\end{subfigure}
	\caption{Comparison of solution to the test case in 1D for $\eps = 0.1$ and $\tau = 0.1$ (top row) and $\tau = 0.01$ (bottom row) at $t = 0.05$.}
	\label{fig:1d-2}
\end{figure}
\begin{figure}[H]
	\centering
	\begin{subfigure}{0.325\textwidth}
		\centering
		\includegraphics[width=\linewidth]{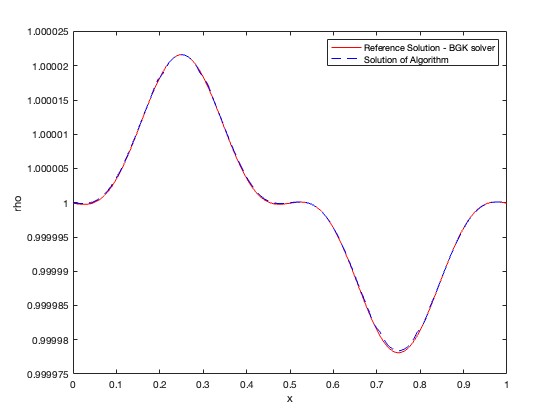}
	\end{subfigure}
	\centering
		\begin{subfigure}{0.325\textwidth}
		\centering
		\includegraphics[width=\linewidth]{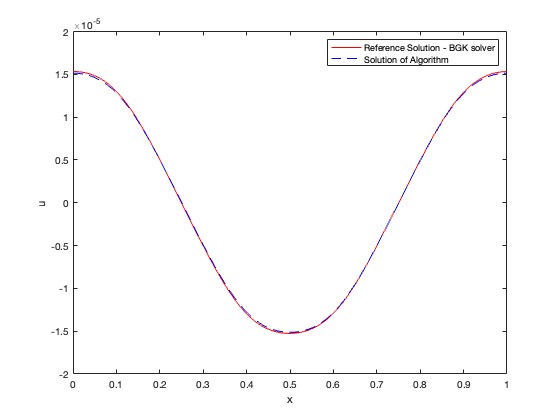}
	\end{subfigure}
	\centering
		\begin{subfigure}{0.325\textwidth}
		\centering
		\includegraphics[width=\linewidth]{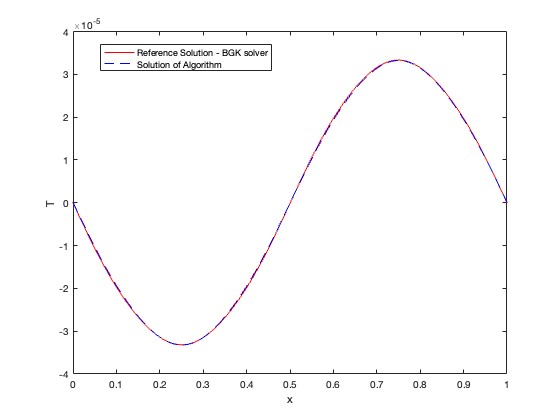}
	\end{subfigure}
		\centering
	\begin{subfigure}{0.325\textwidth}
		\centering
		\includegraphics[width=\linewidth]{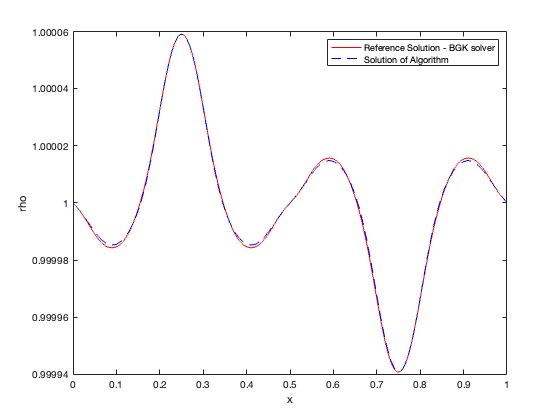}
	\end{subfigure}
		\centering
	\begin{subfigure}{0.325\textwidth}
		\centering
		\includegraphics[width=\linewidth]{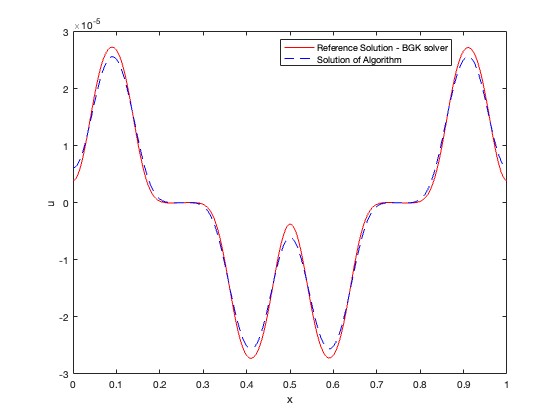}
	\end{subfigure}
	\centering
	\begin{subfigure}{0.325\textwidth}
		\centering
		\includegraphics[width=\linewidth]{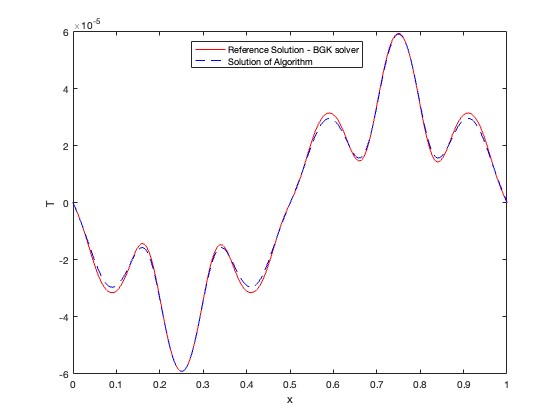}
	\end{subfigure}
	\caption{Comparison of solution to the test case in 1D for $\eps = 0.01$ and $\tau = 0.1$ (top row) and $\tau = 0.01$ (bottom row) at $t = 0.05$.}
	\label{fig:1d-3}
\end{figure}

\subsubsection{Comparison of CPU times}
	The CPU time for the BGK simulation obviously is proportional   to the value of $\epsilon$ according to the CFL condition
	(\ref{CFL}). 
For the AP method the time discretization is given as discussed in Remark \ref{remCFL}.
Using  equal spatial discretization size for both methods and, for the case  $\epsilon=1$ also equal time steps according to the above CFL-restrictions,  the CPU times for AP method  and standard BGK solver  for $\epsilon=1$  are comparable. 
	For  smaller values of $\epsilon$ the computation times for the BGK equation are becoming increasingly longer
	due to the $\epsilon$-dependent time step restriction (\ref{CFL}). The time step restriction for the AP-method 
	 (\ref{CFL2}) on the contrary is independent of $\epsilon$ and the time step can be chosen in the present situation equal to the one for $\epsilon=1$ at least for $\tau $ not too large.
	Changing for example  $\epsilon$ from $1$ to $0.1$ is equivalent to a rescaling of time by a factor $10$ and resulting in a CPU time which is 10 times longer for the BGK equation. 
	For small values of $\epsilon$  this leads to a considerable gain of computation times
	for the AP method  compared to a standard BGK simulation.  
	
\subsection{Thick shear layer test case in 2D}
Finally, we consider the following periodic two-dimensional model problem. Let $(x,y) \in \left[ 0,2 \pi\right]^2$ and let the initial flow to consist of a horizontal shear-layer of finite thickness, perturbed by a small amplitude vertical velocity of the form 
\begin{align}\label{InitialVelocity}
	u_1(x,y,0) = \begin{cases}\tanh(\frac{1}{\rho}(y-\pi/2)), & y \leq \pi \\ \tanh(\frac{1}{\rho}(3\pi/2 - y)), & y> \pi \end{cases}
\end{align}
and
\begin{align}
	u_2(x,y,0) = \delta \sin(x)
\end{align}
with the perturbation parameter $\delta = 0.05$ and the "thick" shear-layer width parameter $\rho = \frac{\pi}{15}$, see \cite{Bell} for details. 
A convergence study is shown in Tables~\eqref{tab:convergence_u1}--\eqref{tab:convergence_p} for the second and third order IMEX-RK methods (as described in Section~\ref{IMEXtabs}) together with third and fifth order WENO reconstructions as well as second and fourth order central difference approximations for the space discretization. 
To evaluate the convergence properties and accuracy of the numerical schemes, the global discretization error is measured using $L^1$, $L^2$ and $L^\infty$ norms. The reference solution is obtained from the numerical approximation computed on a fine mesh of $513 \times 513$ grid points. The results reported in Tables~\eqref{tab:convergence_u1}--\eqref{tab:convergence_p} confirm that the methods achieve the expected order of convergence.

For the numerical simulations, we employ a $128 \times 128$ spatial grid and discretize the velocity domain using $20 \times 20$ points.
We evaluate the performance of our method across three distinct regimes, i.e. rarefied, transitional and macroscopic limit regime. 

The rarefied and transitional regimes are used to validate the proposed method by comparing it against a standard BGK solver.
We consider cases with $\varepsilon = 1$ and $\varepsilon = 0.1$ in order to compare our solution with a reference solution computed by the BGK solver shortly described above, see also \cite{TKR}. Since the reference BGK solver is only second-order accurate, we utilize a comparable second-order version of our algorithm for this validation. Specifically, this version is obtained by employing second-order central differences combined with a minmod limiter for spatial reconstruction, rather than the Lax--Friedrichs flux functions and WENO reconstructions used in our higher order implementation.
As shown in Figures~\eqref{fig:2d-1} -- \eqref{fig:2d-3}, our solver provides a strong approximation in the rarefied  regime, matching the reference solution closely.

In the transitional regime, we further assess the validity of the new AP scheme by comparing it with the BGK solver, see Figure~\eqref{fig:2d-7}. While this comparison confirms again the basic validity of our approach, further strength lies in its high--order capabilities, that are able to successfully capture peak amplitudes and steep gradients that are smoothed out by the second--order approach, as can be nicely observed
in Figure~\eqref{fig:2d-8}. By utilizing IMEX RK2 with WENO3 and second order central difference approximations or IMEX RK3 with WENO5 and fourth order central difference approximations, we achieve sharper results, particularly as $\tau$ and $\varepsilon$ decrease.  
It is worth noting that the standard BGK simulation becomes computationally expensive in this regime, with computation times approximately ten times longer than in the $\varepsilon=1$ case.

Finally, we investigate the macroscopic limit regime using small values of $\varepsilon$ and varying values of $\tau$.
In this regime, standard BGK simulations are no longer computationally feasible. We again compare the higher order methods with each other. Figures~\eqref{fig:2d-9} -- \eqref{fig:2d-11} confirm that our solver also performs as expected in the incompressible regime for $\varepsilon = 10^{-6}$ and the results coincide well with those of incompressible Navier--Stokes simulation with different values of the diffusivity and with the incompressible Euler limit. As already mentioned, setting $\tau = 0.01$ yields the incompressible Navier--Stokes solution, while reducing $\tau$ to $0.0001$ causes Algorithm~\ref{alg1FullBGK} to approach the incompressible Euler case. 
\begin{table}[htbp]
    \centering
    \small
    \begin{tabular}{@{} l c cc cc cc @{}}
        \toprule
        \multirow{2}{*}{\textbf{Method}} & \multirow{2}{*}{$N$} & \multicolumn{2}{c}{\textbf{$L^1$ Error}} & \multicolumn{2}{c}{\textbf{$L^2$ Error}} & \multicolumn{2}{c}{\textbf{$L^\infty$ Error}} \\
        \cmidrule(lr){3-4} \cmidrule(lr){5-6} \cmidrule(lr){7-8}
        & & Error & EOC & Error & EOC & Error & EOC \\
        \midrule
        \multirow{4}{*}{\textbf{IMEX RK2}} 
        & 33  & 2.2198 & --   & 5.561e-1 & --   & 2.009e-1 & --   \\
       & 65  & 0.7009 & 1.6632 & 1.897e-1 & 1.5516 & 7.69e-2 & 1.3859 \\
       & 129  & 1.325e-1 & 2.4036 & 3.74e-2 & 2.3417 & 1.69e-2 & 2.1837 \\
        & 257  & 2.25e-2 & 2.5559 & 6.40e-3 & 2.5381 & 2.80e-3 & 2.5838 \\
        
        \addlinespace[0.3cm]
        \multirow{4}{*}{\textbf{IMEX RK3}} 
        & 33  & 1.0781 & --   & 2.843e-1 & --   & 1.10e-1 & --   \\
        & 65  & 9.18e-2 & 3.5533 & 2.62e-2 & 3.4411 & 1.28e-2 & 3.1052 \\
        & 129  & 1.06e-2 & 3.1136 & 4.00e-3 & 2.7207 & 3.10e-3 & 2.0251 \\
        & 257  & 5.40e-4 & 4.2963 & 2.21e-4 & 4.1670 & 1.689e-4 & 4.2162 \\
 \\
        
        
        \bottomrule
    \end{tabular}
    \caption{Convergence analysis for the double shear layer test case with $\varepsilon = 10^{-6}$ and $\tau = 0$ at $t = 1$ showing variable $u_1$ and using two different numerical methods, namely IMEX RK2 with WENO3 and IMEX RK3 with WENO5. EOC denotes the Experimental Order of Convergence.}
    \label{tab:convergence_u1}
\end{table}

\begin{table}[htbp]
    \centering
    \small
    \begin{tabular}{@{} l c cc cc cc @{}}
        \toprule
        \multirow{2}{*}{\textbf{Method}} & \multirow{2}{*}{$N$} & \multicolumn{2}{c}{\textbf{$L^1$ Error}} & \multicolumn{2}{c}{\textbf{$L^2$ Error}} & \multicolumn{2}{c}{\textbf{$L^\infty$ Error}} \\
        \cmidrule(lr){3-4} \cmidrule(lr){5-6} \cmidrule(lr){7-8}
        & & Error & EOC & Error & EOC & Error & EOC \\
        \midrule
        \multirow{4}{*}{\textbf{IMEX RK2}} 
        & 33  & 0.1288 & --   & 3.35e-2 & --   & 2.10e-2 & --   \\
       & 65  & 6.36e-2 & 1.0183 & 1.82e-2 & 0.8779 & 1.03e-2 & 1.0245 \\
       & 129  & 1.45e-2 & 2.1372 & 4.10e-3 & 2.1574 & 3.40e-3 & 1.6163 \\
        & 257  & 1.90e-3 & 2.8912 & 6.306e-4 & 2.6950 & 5.53e-4 & 2.6094 \\
        
        \addlinespace[0.3cm]
        \multirow{4}{*}{\textbf{IMEX RK3}} 
        & 33  & 0.1343 & --   & 3.17e-2 & --   & 1.75e-2 & --   \\
        & 65  & 2.43e-2 & 2.4679 & 6.40e-3 & 2.3101 & 3.50e-3 & 2.3128 \\
        & 129  & 9.13e-4 & 4.7325 & 2.932e-4 & 4.4469 & 2.30e-4 & 3.9328 \\
        & 257  & 2.167e-5 & 5.3967 & 7.531e-6 & 5.2829 & 6.857e-6 & 5.0683 \\
 \\
        
        
        \bottomrule
    \end{tabular}
    \caption{Convergence analysis for the double shear layer test case with $\varepsilon = 10^{-6}$ and $\tau = 0$ at $t = 1$ showing variable $u_2$ and using two different numerical methods, namely IMEX RK2 with WENO3 and IMEX RK3 with WENO5. EOC denotes the Experimental Order of Convergence.}
        \label{tab:convergence_u2}
\end{table}

\begin{table}[htbp]
    \centering
    \small
    \begin{tabular}{@{} l c cc cc cc @{}}
        \toprule
        \multirow{2}{*}{\textbf{Method}} & \multirow{2}{*}{$N$} & \multicolumn{2}{c}{\textbf{$L^1$ Error}} & \multicolumn{2}{c}{\textbf{$L^2$ Error}} & \multicolumn{2}{c}{\textbf{$L^\infty$ Error}} \\
        \cmidrule(lr){3-4} \cmidrule(lr){5-6} \cmidrule(lr){7-8}
        & & Error & EOC & Error & EOC & Error & EOC \\
        \midrule
        \multirow{4}{*}{\textbf{IMEX RK2}} 
        & 33  & 1.9774 & --   & 3.453e-1 & --   & 7.61e-2 & --   \\
       & 65  & 1.5075 & 0.3914 & 3.775e-1 & -0.1286 & 2.284e-1 & -1.5856 \\
       & 129  & 3.75e-1 & 2.0071 & 1.157e-1 & 1.7063 & 9.03e-2 & 1.3389 \\
        & 257  & 6.38e-2 & 2.5543 & 1.93e-2 & 2.5841 & 1.67e-2 & 2.4371 \\
        
        \addlinespace[0.3cm]
        \multirow{4}{*}{\textbf{IMEX RK3}} 
        & 33  & 1.0717 & --   & 2.024e-1 & --   & 4.47e-2 & --   \\
        & 65  & 2.955e-1 & 1.8586 & 7.78e-2 & 1.3795 & 5.75e-2 & -0.3621 \\
        & 129  & 1.65e-2 & 4.1627 & 4.50e-3 & 4.0995 & 3.10e-3 & 4.2155 \\
        & 257  & 3.20e-3 & 2.3556 & 5.23e-4 & 3.1169 & 1.337e-4 & 4.5325 \\
 \\
        
        
        \bottomrule
    \end{tabular}
    \caption{Convergence analysis for the double shear layer test case with $\varepsilon = 10^{-6}$ and $\tau = 0$ at $t = 1$ showing variable $p$ and using two different numerical methods, namely IMEX RK2 with WENO3 and IMEX RK3 with WENO5. EOC denotes the Experimental Order of Convergence.}
        \label{tab:convergence_p}
\end{table}

\begin{figure}[H]
	\centering
	\begin{subfigure}{0.45\textwidth}
		\centering
		\includegraphics[width=\linewidth]{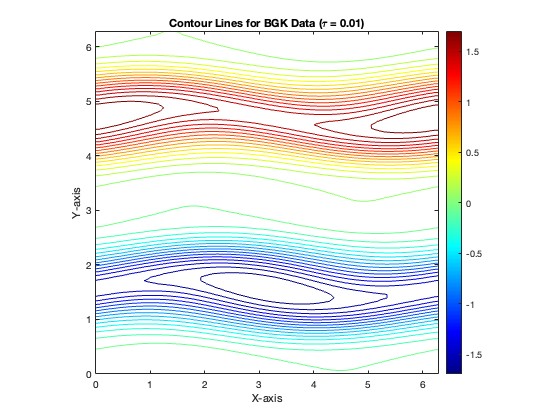}
	\end{subfigure}
	\centering
	\begin{subfigure}{0.45\textwidth}
		\centering
		\includegraphics[width=\linewidth]{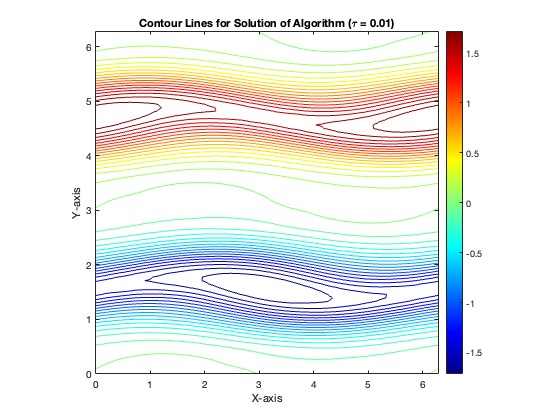}
	\end{subfigure}
	\caption{Solution of the shear layer test case for $\eps = 1$ and $\tau = 0.01$ at $t = 9$. Left: BGK \cite{TKR}, right: IMEX RK2, minmod }
	\label{fig:2d-1}
\end{figure}

\begin{figure}[H]
	\centering
	\begin{subfigure}{0.45\textwidth}
		\centering
		\includegraphics[width=\linewidth]{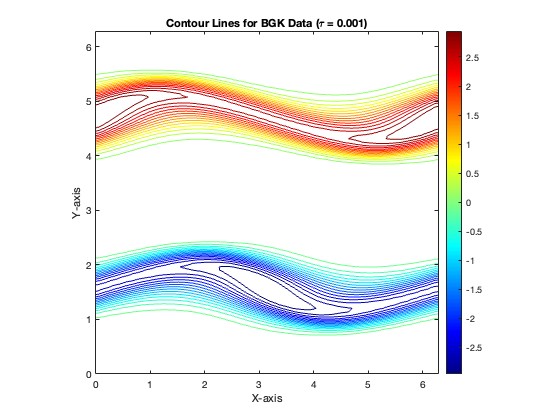}
	\end{subfigure}
	\centering
	\begin{subfigure}{0.45\textwidth}
		\centering
			\includegraphics[width=\linewidth]{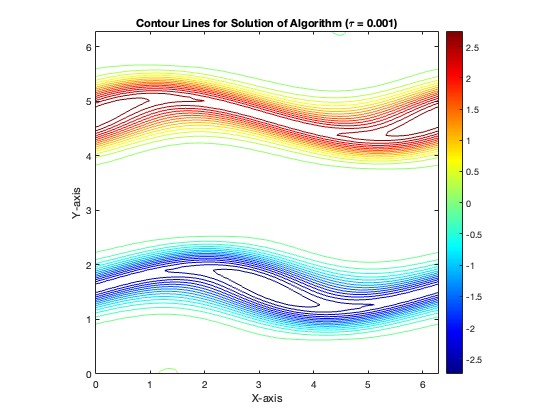}
	\end{subfigure}
	\caption{Solution of the shear layer test case for $\eps = 1$ and $\tau = 0.001$ at $t = 9$. Left: BGK \cite{TKR}, right: IMEX RK2, minmod}
	\label{fig:2d-2}
\end{figure}

\begin{figure}[H]
	\centering
	\begin{subfigure}{0.45\textwidth}
		\centering
		\includegraphics[width=\linewidth]{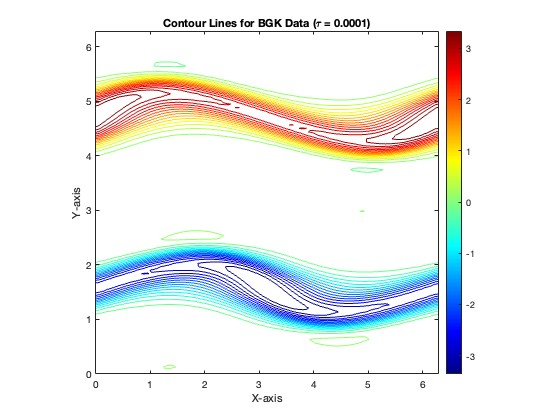}
	\end{subfigure}
	\centering
	\begin{subfigure}{0.45\textwidth}
		\centering
	\includegraphics[width=\linewidth]{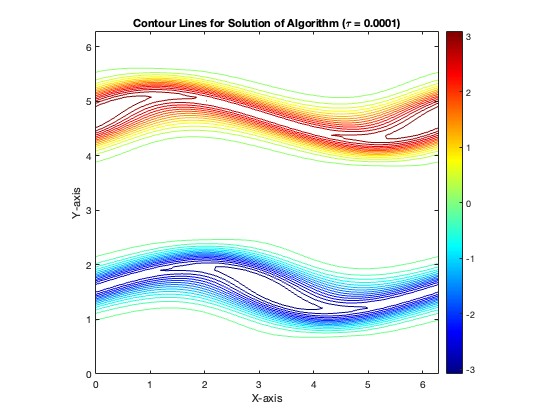}

	\end{subfigure}
	\caption{Solution of the shear layer test case for $\eps = 1$ and $\tau = 0.0001$ at $t = 9$. Left: BGK \cite{TKR}, right: IMEX RK2, minmod}
	\label{fig:2d-3}
\end{figure}

\begin{figure}[H]
	\centering
	\begin{subfigure}{0.45\textwidth}
		\centering
		\includegraphics[width=\linewidth]{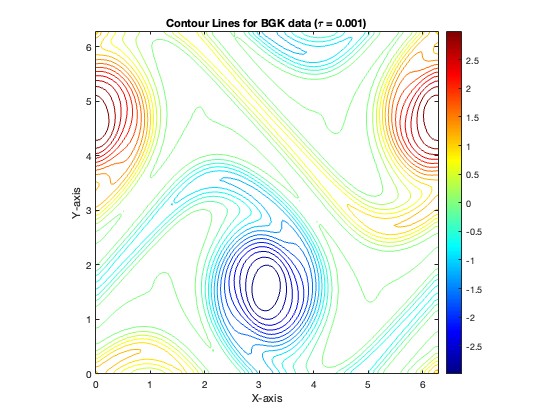}
	\end{subfigure}
	\centering
	\begin{subfigure}{0.45\textwidth}
		\centering
	\includegraphics[width=\linewidth]{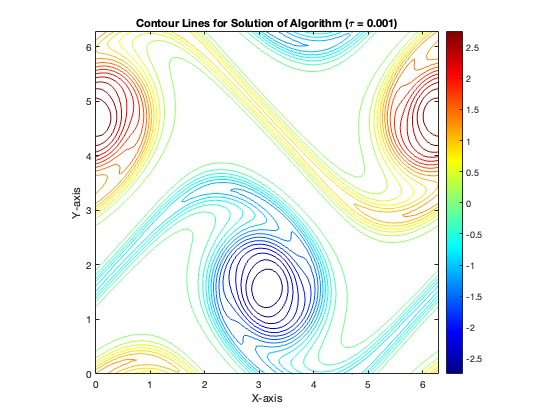}

	\end{subfigure}
	\caption{Solution of the shear layer test case for $\eps = 0.1$ and $\tau = 0.001$ at $t = 9$. Left: BGK \cite{TKR}, right: IMEX RK2, minmod}
	\label{fig:2d-7}
\end{figure}

\begin{figure}[H]
	\centering
	\begin{subfigure}{0.45\textwidth}
		\centering
		\includegraphics[width=\linewidth]{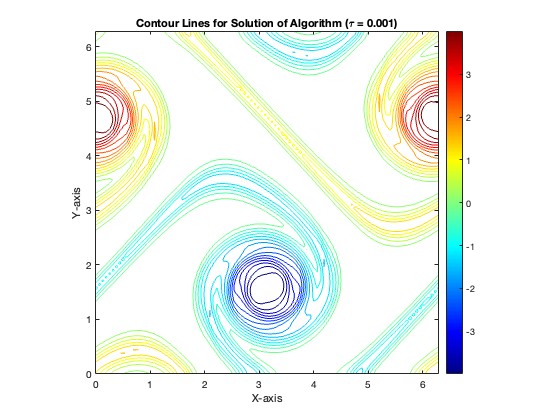}
	\end{subfigure}
	\centering
	\begin{subfigure}{0.45\textwidth}
		\centering
	\includegraphics[width=\linewidth]{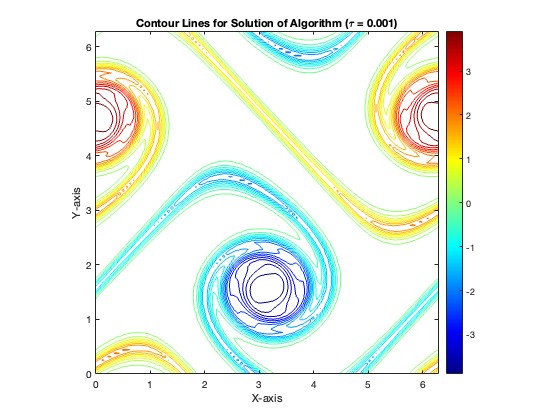}

	\end{subfigure}
	\caption{Solution of the shear layer test case for $\eps = 0.1$ and $\tau = 0.001$ at $t = 9$. (left: IMEX RK2, WENO3, right: IMEX RK3, WENO5)}
	\label{fig:2d-8}
\end{figure}

\begin{figure}[H]
	\centering
	\begin{subfigure}{0.45\textwidth}
		\centering
		\includegraphics[width=\linewidth]{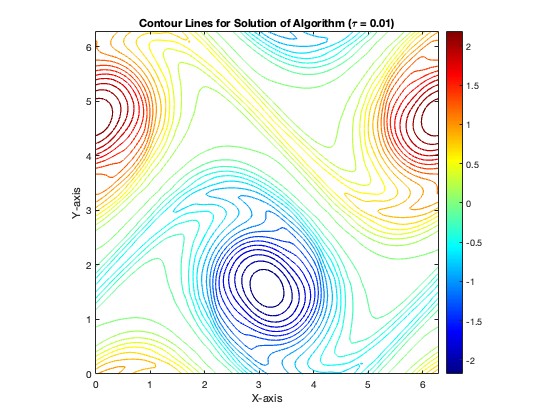}
	\end{subfigure}
	\centering
	\begin{subfigure}{0.45\textwidth}
		\centering
		\includegraphics[width=\linewidth]{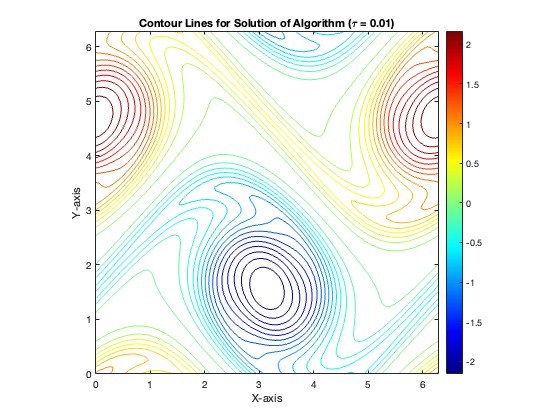}
	\end{subfigure}
	\caption{Solution of the shear layer test case for $\eps = 10^{-6}$ and $\tau = 0.01$  at $t = 9$. (left: IMEX RK2,WENO3,right: IMEX RK3,WENO5)}
	\label{fig:2d-9}
\end{figure}

\begin{figure}[H]
	\centering
	\begin{subfigure}{0.45\textwidth}
		\centering
		\includegraphics[width=\linewidth]{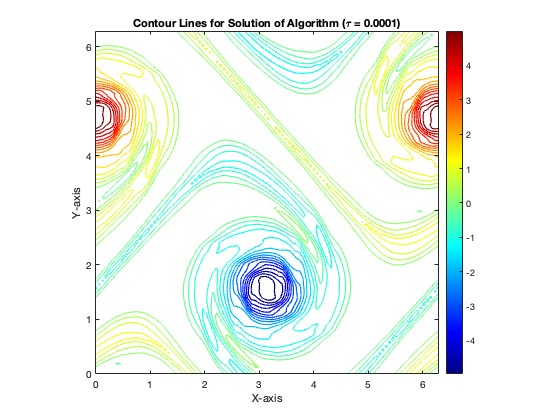}
	\end{subfigure}
	\centering
	\begin{subfigure}{0.45\textwidth}
		\centering
		\includegraphics[width=\linewidth]{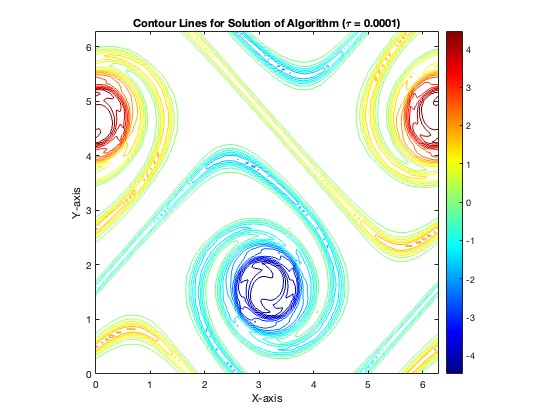}
	\end{subfigure}
	\caption{Solution of the shear layer test case for $\eps = 10^{-6}$ and  $\tau = 0.0001$ at $t = 9$. (left: IMEX RK2,WENO3,right: IMEX RK3,WENO5)}
	\label{fig:2d-10}
\end{figure}


\begin{figure}[H]
	\centering
	\begin{subfigure}{0.45\textwidth}
		\centering
		\includegraphics[width=\linewidth]{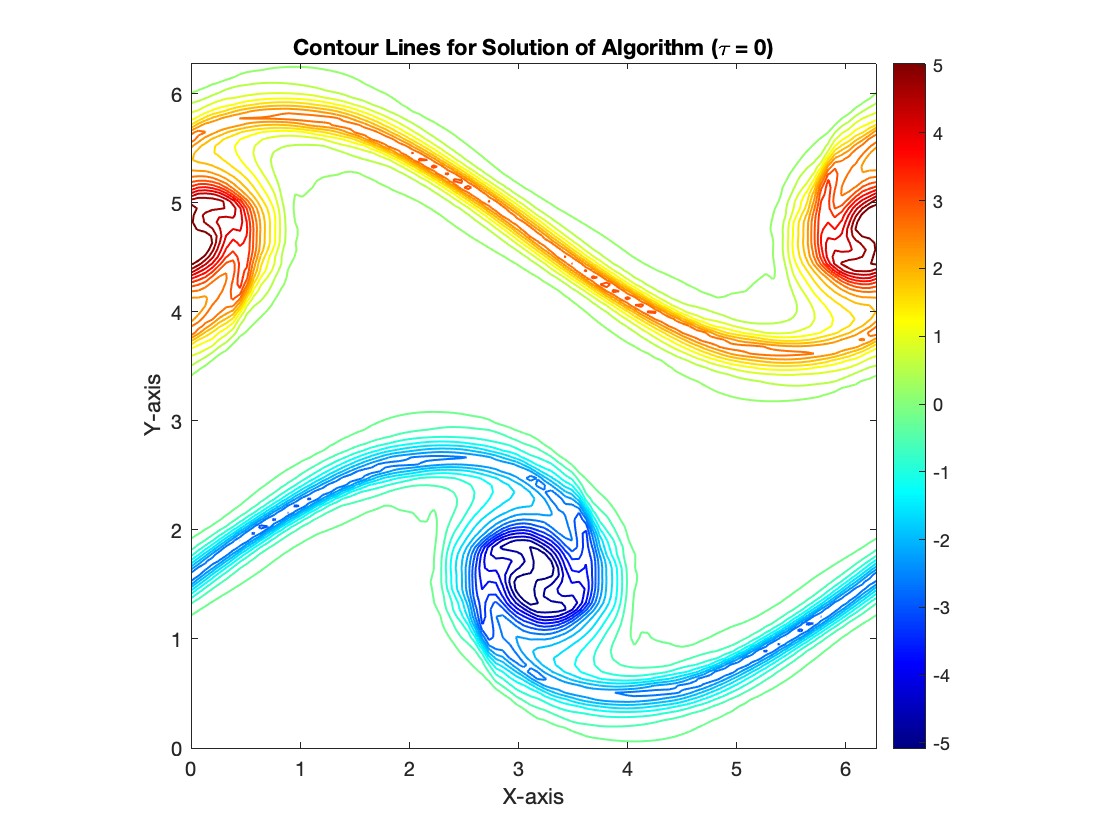}
	\end{subfigure}
	\centering
	\begin{subfigure}{0.45\textwidth}
	\centering
	\includegraphics[width=\linewidth]{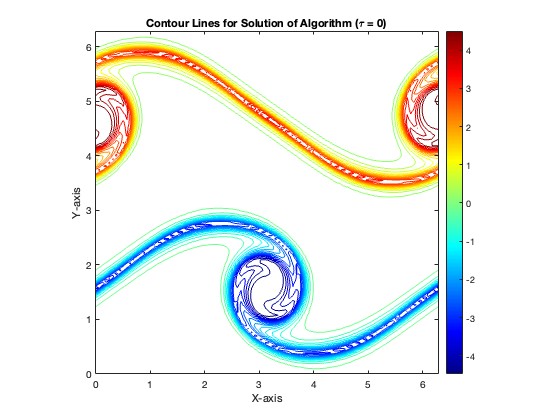}
	\end{subfigure}
	\caption{Solution of the shear layer test case for $\eps = 10^{-6}$ and $\tau = 0$ at $t = 6$.  (left: IMEX RK2,WENO3,right: IMEX RK3,WENO5)}
	\label{fig:2d-11}
\end{figure}
\section{Conclusion}\label{sec5}
In this work, we have introduced a novel high-order numerical framework for solving kinetic equations across a broad range of regimes, with a specific focus on the low Mach number limit. By employing a micro-macro decomposition, we reformulate the BGK equation into a coupled system that allows for a seamless transition from the rarefied kinetic scale to the incompressible hydrodynamic limit.

The proposed methodology leverages the strengths of Implicit--Explicit (IMEX) Runge Kutta schemes for temporal discretization, providing the necessary stability to handle the stiffness of the collision term and the fast acoustic modes inherent in low Mach number flows. Spatially, the combination of high-order WENO reconstructions and central difference approximations ensures that the scheme remains non--oscillatory and highly accurate. A central achievement of this framework is its Asymptotic-Preserving (AP) property; we have theoretically demonstrated and numerically verified that as the scaling parameter $\varepsilon$ tends to zero, the scheme degenerates into a consistent, high--order projection method for the incompressible Navier--Stokes equations.

Numerical experiments in one and two dimensions confirm the robustness of the solver. In 1D convergence studies, the method achieved its expected high-order accuracy, maintaining high precision even for smaller values of $\varepsilon$ where absolute pressure magnitudes are large. In the 2D "thick shear layer" benchmark, the higher--order versions of the algorithm demonstrated a superior ability to capture sharp gradients and peak amplitudes compared to standard second-order solvers, particularly in regimes with small relaxation times $\tau$. 

Future research will focus on several key areas. First, we intend to extend this framework beyond the periodic boundary conditions used in this study to include more physically complex constraints, such as inflow--outflow conditions and solid wall reflections. 
Additionally, we plan to extend the methodology to the full Boltzmann collision operator and explore the implementation on non--rectangular or irregular grids using high--order least-squares finite difference approximations, aimed at large--scale simulations of multiscale fluid phenomena.

\section*{Acknowledgements}  We thank Nicolas Crouseilles for interesting discussions and reference to unpublished closely related work.

\bibliographystyle{siamplain}

\bibliography{reference_last_SIAM}

@article {GSR,
	AUTHOR = {Golse, F. and Saint-Raymond, L.},
	TITLE = {The {N}avier-{S}tokes limit of the {B}oltzmann equation for
	bounded collision kernels},
	JOURNAL = {Invent. Math.},
	FJOURNAL = {Inventiones Mathematicae},
	VOLUME = {155},
	YEAR = {2004},
	NUMBER = {1},
	PAGES = {81--161},
}

@article{TKR,
	author = {Sudarshan Tiwari and Axel Klar and Giovanni Russo},
	title = {A meshfree arbitrary Lagrangian-Eulerian method for the BGK model of the Boltzmann equation with moving boundaries},
	journal = {Journal of Computational Physics},
	volume = {458},
	pages = {111088},
	year = {2022},
	issn = {0021-9991},
}

@article{SR,
	author    = {L. Saint-Raymond},
	title     = {From the {BGK} model to the {N}avier-{S}tokes equations},
	journal   = {Ann. Sci. Ecole Norm. Sup. },
	volume    = {36 (4)},
	pages     = {271--317},
	year      = {2003},
}

@article{JinPareschiToscani1998,
	author    = {Shi Jin and Lorenzo Pareschi and Giuseppe Toscani},
	title     = {Diffusive Relaxation Schemes for Multiscale Discrete‐Velocity Kinetic Equations},
	journal   = {SIAM Journal on Numerical Analysis},
	volume    = {35},
	number    = {6},
	pages     = {2405--2439},
	year      = {1998},
}

@article{AscherRuuthSpiteri1997,
	author    = {U.~M. Ascher and S.~J. Ruuth and R.~J. Spiteri},
	title     = {Implicit–{E}xplicit {R}unge–{K}utta methods for time‐dependent partial differential equations},
	journal   = {Applied Numerical Mathematics},
	volume    = {25},
	number    = {2--3},
	pages     = {151--167},
	year      = {1997},
}

@article{JinLevermore1996,
	author    = {Shi Jin and C.~David Levermore},
	title     = {Numerical methods for hyperbolic systems with stiff relaxation},
	journal   = {Communications on Pure and Applied Mathematics},
	volume    = {49},
	number    = {3},
	pages     = {323--343},
	year      = {1996}
}

@article{NaldiPareschiToscani2002,
	author    = {G. Naldi and Lorenzo Pareschi and Giuseppe Toscani},
	title     = {Numerical schemes for hyperbolic systems of conservation laws with stiff diffusive relaxation},
	journal   = {Applicable Analysis},
	volume    = {81},
	number    = {1},
	pages     = {1--26},
	year      = {2002}
}

@article {Levermore91,
	AUTHOR = {Bardos, Claude and Golse, Fran\c cois and Levermore, David},
	TITLE = {Fluid dynamic limits of kinetic equations. {I}. {F}ormal
	derivations},
	JOURNAL = {J. Statist. Phys.},
	FJOURNAL = {Journal of Statistical Physics},
	VOLUME = {63},
	YEAR = {1991},
	NUMBER = {1-2},
	PAGES = {323--344},
}

@book {Cer,
	AUTHOR = {Cercignani, Carlo},
	TITLE = {The {B}oltzmann equation and its applications},
	SERIES = {Applied Mathematical Sciences},
	VOLUME = {67},
	PUBLISHER = {Springer-Verlag, New York},
	YEAR = {1988},
	PAGES = {xii+455},
}

@article {Esposito,
	AUTHOR = {De Masi, A. and Esposito, R. and Lebowitz, J. L.},
	TITLE = {Incompressible {N}avier-{S}tokes and {E}uler limits of the
	{B}oltzmann equation},
	JOURNAL = {Comm. Pure Appl. Math.},
	FJOURNAL = {Communications on Pure and Applied Mathematics},
	VOLUME = {42},
	YEAR = {1989},
	NUMBER = {8},
	PAGES = {1189--1214},
}

@article {JinPareschiToscani1999,
	AUTHOR = {Jin, Shi and Pareschi, Lorenzo and Toscani, Giuseppe},
	TITLE = {Uniformly accurate diffusive relaxation schemes for multiscale
	transport equations},
	JOURNAL = {SIAM J. Numer. Anal.},
	FJOURNAL = {SIAM Journal on Numerical Analysis},
	VOLUME = {38},
	YEAR = {2000},
	NUMBER = {3},
	PAGES = {913--936},
}

@article {Dimarco_acta,
	AUTHOR = {Dimarco, G. and Pareschi, L.},
	TITLE = {Numerical methods for kinetic equations},
	JOURNAL = {Acta Num.},
	FJOURNAL = {Acta Numerica},
	VOLUME = {23},
	YEAR = {2014},
	PAGES = {369--520},
}

@article {Mieussens,
	AUTHOR = {Mieussens, Luc},
	TITLE = {Discrete velocity model and implicit scheme for the {BGK}
	equation of rarefied gas dynamics},
	JOURNAL = {Math. Models Methods Appl. Sci.},
	FJOURNAL = {Mathematical Models and Methods in Applied Sciences},
	VOLUME = {10},
	YEAR = {2000},
	NUMBER = {8},
	PAGES = {1121--1149},
}

@article {Illner,
	AUTHOR = {Platkowski, Tadeusz and Illner, Reinhard},
	TITLE = {Discrete velocity models of the {B}oltzmann equation: a survey
	on the mathematical aspects of the theory},
	JOURNAL = {SIAM Rev.},
	FJOURNAL = {SIAM Review. A Publication of the Society for Industrial and
	Applied Mathematics},
	VOLUME = {30},
	YEAR = {1988},
	NUMBER = {2},
	PAGES = {213--255},
}

@article {Mapundi,
	AUTHOR = {Banda, Mapundi and Klar, Axel and Pareschi, Lorenzo and
	Sea\"id, Mohammed},
	TITLE = {Lattice-{B}oltzmann type relaxation systems and high order
	relaxation schemes for the incompressible {N}avier-{S}tokes
	equations},
	JOURNAL = {Math. Comp.},
	FJOURNAL = {Mathematics of Computation},
	VOLUME = {77},
	YEAR = {2008},
	NUMBER = {262},
	PAGES = {943--965},
}

@article {DimarcoAP1,
	AUTHOR = {Dimarco, Giacomo and Pareschi, Lorenzo},
	TITLE = {Asymptotic preserving implicit-explicit {R}unge-{K}utta
	methods for nonlinear kinetic equations},
	JOURNAL = {SIAM J. Numer. Anal.},
	FJOURNAL = {SIAM Journal on Numerical Analysis},
	VOLUME = {51},
	YEAR = {2013},
	NUMBER = {2},
	PAGES = {1064--1087},
}

@article {DimarcoAP2,
	AUTHOR = {Dimarco, Giacomo and Pareschi, Lorenzo},
	TITLE = {Exponential {R}unge-{K}utta methods for stiff kinetic
	equations},
	JOURNAL = {SIAM J. Numer. Anal.},
	FJOURNAL = {SIAM Journal on Numerical Analysis},
	VOLUME = {49},
	YEAR = {2011},
	NUMBER = {5},
	PAGES = {2057--2077},
}

@article {KlarAP,
	AUTHOR = {Klar, Axel},
	TITLE = {An asymptotic-induced scheme for nonstationary transport
	equations in the diffusive limit},
	JOURNAL = {SIAM J. Numer. Anal.},
	FJOURNAL = {SIAM Journal on Numerical Analysis},
	VOLUME = {35},
	YEAR = {1998},
	NUMBER = {3},
	PAGES = {1073--1094},
}

@article {IMEX_PR,
	AUTHOR = {Pareschi, Lorenzo and Russo, Giovanni},
	TITLE = {Implicit-{E}xplicit {R}unge-{K}utta schemes and applications
	to hyperbolic systems with relaxation},
	JOURNAL = {J. Sci. Comput.},
	FJOURNAL = {Journal of Scientific Computing},
	VOLUME = {25},
	YEAR = {2005},
	NUMBER = {1-2},
	PAGES = {129--155},
}

@article {Bosc,
	AUTHOR = {Boscarino, S. and Pareschi, L. and Russo, G.},
	TITLE = {Implicit-{E}xplicit {R}unge-{K}utta schemes for hyperbolic
	systems and kinetic equations in the diffusion limit},
	JOURNAL = {SIAM J. Sci. Comput.},
	FJOURNAL = {SIAM Journal on Scientific Computing},
	VOLUME = {35},
	YEAR = {2013},
	NUMBER = {1},
	PAGES = {A22--A51},
}

@article {Carp,
	AUTHOR = {Kennedy, Christopher A. and Carpenter, Mark H.},
	TITLE = {Additive {R}unge-{K}utta schemes for
	convection-diffusion-reaction equations},
	JOURNAL = {Appl. Numer. Math.},
	FJOURNAL = {Applied Numerical Mathematics. An IMACS Journal},
	VOLUME = {44},
	YEAR = {2003},
	NUMBER = {1-2},
	PAGES = {139--181},
}

@article {Ruuth,
	AUTHOR = {Hundsdorfer, Willem and Ruuth, Steven J.},
	TITLE = {I{MEX} extensions of linear multistep methods with general
	monotonicity and boundedness properties},
	JOURNAL = {J. Comput. Phys.},
	FJOURNAL = {Journal of Computational Physics},
	VOLUME = {225},
	YEAR = {2007},
	NUMBER = {2},
	PAGES = {2016--2042},
}

@article {Multis,
	AUTHOR = {Dimarco, Giacomo and Pareschi, Lorenzo},
	TITLE = {Implicit-{E}xplicit linear multistep methods for stiff kinetic
	equations},
	JOURNAL = {SIAM J. Numer. Anal.},
	FJOURNAL = {SIAM Journal on Numerical Analysis},
	VOLUME = {55},
	YEAR = {2017},
	NUMBER = {2},
	PAGES = {664--690},
}

@article {Shu,
	AUTHOR = {Shu, Chi-Wang and Osher, Stanley},
	TITLE = {Efficient implementation of essentially nonoscillatory
	shock-capturing schemes},
	JOURNAL = {J. Comput. Phys.},
	FJOURNAL = {Journal of Computational Physics},
	VOLUME = {77},
	YEAR = {1988},
	NUMBER = {2},
	PAGES = {439--471},
}

@book {Chap,
	AUTHOR = {Chapman, Sydney and Cowling, T. G.},
	TITLE = {The {M}athematical {T}heory of {N}on-uniform {G}ases},
	PUBLISHER = {Cambridge University Press, Cambridge},
	YEAR = {1939},
	PAGES = {xxiii+404},
}

@article {Bosc1,
	AUTHOR = {Boscarino, Sebastiano and Pareschi, Lorenzo and Russo,
	Giovanni},
	TITLE = {A unified {IMEX} {R}unge-{K}utta approach for hyperbolic
	systems with multiscale relaxation},
	JOURNAL = {SIAM J. Numer. Anal.},
	FJOURNAL = {SIAM Journal on Numerical Analysis},
	VOLUME = {55},
	YEAR = {2017},
	NUMBER = {4},
	PAGES = {2085--2109},
}

@article {Klar99,
	AUTHOR = {Klar, Axel},
	TITLE = {Relaxation scheme for a {L}attice-{B}oltzmann-type discrete
	velocity model and numerical {N}avier-{S}tokes limit},
	JOURNAL = {J. Comput. Phys.},
	FJOURNAL = {Journal of Computational Physics},
	VOLUME = {148},
	YEAR = {1999},
	NUMBER = {2},
	PAGES = {416--432},
}

@article {JunkKlar,
	AUTHOR = {Junk, Michael and Klar, Axel},
	TITLE = {Discretizations for the incompressible {N}avier-{S}tokes
	equations based on the lattice {B}oltzmann method},
	JOURNAL = {SIAM J. Sci. Comput.},
	FJOURNAL = {SIAM Journal on Scientific Computing},
	VOLUME = {22},
	YEAR = {2000},
	NUMBER = {1},
	PAGES = {1--19},
}

@article {K992,
	AUTHOR = {Klar, Axel},
	TITLE = {An asymptotic preserving numerical scheme for kinetic
	equations in the low {M}ach number limit},
	JOURNAL = {SIAM J. Numer. Anal.},
	FJOURNAL = {SIAM Journal on Numerical Analysis},
	VOLUME = {36},
	YEAR = {1999},
	NUMBER = {5},
	PAGES = {1507--1527},
}

@article{BGK,
	title = {A Model for Collision Processes in Gases. I. Small Amplitude Processes in Charged and Neutral One-Component Systems},
	author = {Bhatnagar, P. L. and Gross, E. P. and Krook, M.},
	journal = {Phys. Rev.},
	volume = {94},
	issue = {3},
	pages = {511--525},
	year = {1954},
}

@book{BoscarinoPareschiRusso2024,
author = {Boscarino, Sebastiano and Pareschi, Lorenzo and Russo, Giovanni},
title = {Implicit-Explicit Methods for Evolutionary Partial Differential Equations},
publisher = {Society for Industrial and Applied Mathematics},
year = {2024},
address = {Philadelphia, PA},
edition   = {},
}

@article {Bosc2,
	AUTHOR = {Boscarino, Sebastiano and Russo, Giovanni},
	TITLE = {On a class of uniformly accurate {IMEX} {R}unge-{K}utta
	schemes and applications to hyperbolic systems with
	relaxation},
	JOURNAL = {SIAM J. Sci. Comput.},
	FJOURNAL = {SIAM Journal on Scientific Computing},
	VOLUME = {31},
	YEAR = {2009},
	NUMBER = {3},
	PAGES = {1926--1945},
	ISSN = {1064-8275,1095-7197},
}

@book {Hairer,
	AUTHOR = {Hairer, E. and Wanner, G.},
	TITLE = {Solving ordinary differential equations. {II}},
	SERIES = {Springer Series in Computational Mathematics},
	VOLUME = {14},
	EDITION = {Second},
	NOTE = {Stiff and differential-algebraic problems},
	PUBLISHER = {Springer-Verlag, Berlin},
	YEAR = {1996},
}

@article {BOSCARINO,
	AUTHOR = {Boscarino, Sebastiano and Qiu, Jing-Mei and Russo, Giovanni
	and Xiong, Tao},
	TITLE = {A high order semi-implicit {IMEX} {WENO} scheme for the
	all-{M}ach isentropic {E}uler system},
	JOURNAL = {J. Comput. Phys.},
	FJOURNAL = {Journal of Computational Physics},
	VOLUME = {392},
	YEAR = {2019},
	PAGES = {594--618},
}

@article {Bell,
	AUTHOR = {Bell, John B. and Colella, Phillip and Glaz, Harland M.},
	TITLE = {A second-order projection method for the incompressible
	{N}avier-{S}tokes equations},
	JOURNAL = {J. Comput. Phys.},
	FJOURNAL = {Journal of Computational Physics},
	VOLUME = {85},
	YEAR = {1989},
	NUMBER = {2},
	PAGES = {257--283},
}

@article {DegondTang2011,
    AUTHOR = {Degond, Pierre and Tang, Min},
     TITLE = {All speed scheme for the low {M}ach number limit of the
              isentropic {E}uler equations},
   JOURNAL = {Commun. Comput. Phys.},
  FJOURNAL = {Communications in Computational Physics},
    VOLUME = {10},
      YEAR = {2011},
    NUMBER = {1},
     PAGES = {1--31},
      ISSN = {1815-2406,1991-7120},
   MRCLASS = {76M20 (65M06 76L05 76N15)},
  MRNUMBER = {2775032},
MRREVIEWER = {Thomas\ H.\ Sonar},
}

@article {DimarcoLoubereVignal2017,
    AUTHOR = {Dimarco, Giacomo and Loub\`ere, Rapha\"el and Vignal,
              Marie-H\'el\`ene},
     TITLE = {Study of a new asymptotic preserving scheme for the {E}uler
              system in the low {M}ach number limit},
   JOURNAL = {SIAM J. Sci. Comput.},
  FJOURNAL = {SIAM Journal on Scientific Computing},
    VOLUME = {39},
      YEAR = {2017},
    NUMBER = {5},
     PAGES = {A2099--A2128},
      ISSN = {1064-8275,1095-7197},
   MRCLASS = {65M08 (65M12 76M12 76N99)},
  MRNUMBER = {3704264},
MRREVIEWER = {Mir\ Sajjad\ Hashemi},
}

@article{BennouneLemou,
	author = {Mounir Bennoune and Mohammed Lemou and Luc Mieussens},
title = {Uniformly stable numerical schemes for the Boltzmann equation preserving the compressible Navier–Stokes asymptotics},
journal = {Journal of Computational Physics},
volume = {227},
number = {8},
pages = {3781-3803},
year = {2008},
issn = {0021-9991},
}

@article{CrouseillesLemou,
  TITLE = {{An asymptotic preserving scheme based on a micro-macro decomposition for collisional Vlasov equations: diffusion and high-field scaling limits.}},
  AUTHOR = {Crouseilles, Nicolas and Lemou, Mohammed},
  JOURNAL = {{Kinetic and Related Models }},
  PUBLISHER = {{AIMS}},
  VOLUME = {4},
  NUMBER = {2},
  PAGES = {441-477},
  YEAR = {2011},
  HAL_VERSION = {v1},
}

@article{LemouMieussens,
author = {Lemou, Mohammed and Mieussens, Luc},
year = {2008},
month = {10},
pages = {334-368},
title = {A New Asymptotic Preserving Scheme Based on Micro-Macro Formulation for Linear Kinetic Equations in the Diffusion Limit},
volume = {31},
journal = {SIAM Journal on Scientific Computing},
}

@article{TATSIOS2025113500,
	author = {Tatsios, Giorgos and Chinnappan, Arun K. and Kamal, Arshad and Vasileiadis, Nikos and Docherty, Stephanie Y. and White, Craig and Gibelli, Livio and Borg, Matthew K. and Kermode, James R. and Lockerby, Duncan A.},
title = {A {DSMC-CFD} coupling method using surrogate modelling for low-speed rarefied gas flows},
journal = {Journal of Computational Physics},
volume = {520},
pages = {113500},
year = {2025},
issn = {0021-9991},
}

@article{PhysRevE.73.056702,
  title = {From the continuous to the lattice Boltzmann equation: The discretization problem and thermal models},
  author = {Philippi, Paulo C. and Hegele, Luiz A. and dos Santos, Lu\'{\i}s O. E. and Surmas, Rodrigo},
  journal = {Phys. Rev. E},
  volume = {73},
  issue = {5},
  pages = {056702},
  numpages = {12},
  year = {2006},
  month = {May},
  publisher = {American Physical Society},
}

\end{document}